\theoremstyle{plain}
\newtheorem{theorem}{Theorem}
\newtheorem{proposition}{Proposition}
\newtheorem{lemma}{Lemma}
\newtheorem{definition}{Definition}
\newtheorem{corollary}{Corollary}
\theoremstyle{remark}
\numberwithin{equation}{section}
\newcommand{\R}{\mathbb R}
\newcommand{\p}{\partial}
\newcommand{\la}{\lambda}
\newcommand{\kb}{\overline{k}}
\newcommand{\T}{\mathcal{T}}
\newcommand{\ga}{\gamma}
\newcommand{\wF}{\widetilde{F}}
\newcommand{\wH}{\widetilde{H}}
\begin{document}

\title[]{ A new class of Nilpotent Jacobians in any dimension}
\author[\'A. Casta\~neda]{\'Alvaro Casta\~neda}
\author[A. van den Essen]{Arno van den Essen}
\address[\'A. Casta\~neda]{Departamento de Matem\'aticas, Facultad de Ciencias, Universidad de
  Chile, Casilla 653, Santiago, Chile}
\address[A. van den Essen]{Department of Mathematics, Radboud University Nijmegen, Postbus 9010, 6500 GL Nijmegen,
The Netherlands}
\email{castaneda@uchile.cl, essen@math.ru.nl}
\date{\today}
\thanks{The first author have been partially supported by FONDECYT Regular 1170968.}
\subjclass[2010]{Primary: 14R15 14R10.}
\keywords{Jacobian Conjecture; Nilpotent Jacobian matrix; Polynomial maps}
\begin{abstract}
The classification of the nilpotent Jacobians with some structure has been an object of study because of its relationship with the Jacobian conjecture.
In this paper we classify the polynomial maps in dimension $n$ of the form $H = (u(x,y), u_2(x,y,x_3), \ldots, u_{n-1}(x,y,x_n), h(x,y))$ with $JH$ nilpotent.
In addition we prove that the maps $X + H$ are invertible, which shows that for this kind of maps the Jacobian conjecture is verified.
\end{abstract}

\maketitle

\section{Introduction}

Let $k$ be a field of characteristic zero and $k[X]=k[x_1,\cdots,x_n]$ the polynomial ring in $n$ variables over $k$. Since the remarkable works of  H. Bass {\it{et al.}} \cite{BCW} and A.V. Yagzhev \cite{Y} concerning the Jacobian Conjecture, the study of polynomial maps $H: k^n \to k^n$ such that its Jacobian matrix $JH$ is nilpotent has grabbed the attention of many authors. Although the previously mentioned works establish that, in order to study the conjecture, it is sufficient to focus on maps of the form $X + H$ where $ H $ is homogeneous of degree $ 3,$ the classification of maps with nilpotent Jacobian of any degree, even inhomogeneous, has an interest which goes beyond the Jacobian Conjecture. For example
it led various authors to formulate the following problem:
\medskip

\noindent {\bf{(Homogeneous) Dependence Problem.}} Let $H=(H_1, \ldots, H_n) \in k[X]^n$ (homogeneous of degree $d \geq 1$) such that $JH$ is nilpotent and $H(0) = 0.$
Does it follow that $H_1, \ldots, H_n$ are linearly dependent over $ k $ or equivalently does it follow that the rows of $JH$
are linearly dependent over $k$?

\medskip

An affirmative answer was given in the following cases: rank $JH\leq 1$ in \cite{BCW}, hence if $n=2$ and in case $H$ is homogeneous
of degree $3$  when $n=3$ by D. Wright in \cite{Wright} (resp. when $n=4$ by E. Hubbers in \cite{Hubbers}). In dimension three
an affirmative answer to the homogeneous dependence problem (in any degree) was given by  M. de Bondt and A. van den Essen in \cite{Bondt1}.  On the other hand M. de Bondt in \cite{Bondt2} constructed homogeneous examples in all dimensions $\geq 5$
of nilpotent Jacobians with over $k$  linearly {\em  independent} rows.

\medskip

Although the answer to the dependence problem turned out to be negative in general, studying this problem payed off in several
ways. For example the assumption that the answer to the dependence problem would be positive led the authors in
\cite{EH} to construct a large class of polynomial maps $H$ such that $JH$ is nilpotent. Several of these examples were
subsequently used to find counterexamples to various conjectures, such as Meisters' Cubic Linearization Conjecture \cite{Ess}, the DMZ-Conjecture  \cite{EH2}, the long standing
Markus-Yamabe Conjecture  and the Discrete Markus Yamabe Problem \cite{CEGHM}.

The first negative answer to the dependence problem was found by the second author in \cite{E}, namely
$$H=(y-x^2,z+2x(y-x^2),-(y-x^2)^2).$$
Remarkably, searching for more negative examples in dimension three, the authors of \cite{ChE} showed that,
looking for such examples of the form
$$(u(x,y),v(x,y,z),h(u(x,u),v(x,y,z))$$
the above example is, apart from a linear coordinate change, essentially the only example. This example was generalized
in Proposition 7.1.9 \cite{vE} to give nilpotent Jacobians in all dimensions, with over $k$ linearly independent rows. It was shown
in \cite{CG} that for these examples $H$ and each $\la<0$, the corresponding dynamical system $\dot{x}=F(x)$, where
$F(x)=\la x+H(x)$, has orbits which escape to infinity, hence are counterexamples to the Markus-Yamabe Conjecture.

Recently, in \cite{Yan}, Dan Yan completely classified all $H$ of the form
$$(u(x,y),v(x,y,z),h(x,y))$$
 with nilpotent
Jacobian and over $k$ linearly independent rows. Again they all turned out to be linearly equivalent to the first example
found by the second author.  These results confirm a conjecture of the first author which asserts that if $JH$ is nilpotent,
with over $\R$ linearly independent rows, then the corresponding dynamical system $\dot{x}=F(x)$, where $F(x)=\la x+H(x)$
and $\la<0$, has orbits which escape to infinity. To get more evidence for this last conjecture it is therefore natural to
look for nilpotent Jacobians in dimensions $\geq 4$.

In this paper we pursue this idea and generalize the recent result of Dan Yan to all dimensions $n\geq 3$. More
precisely, we study maps of the form
$$H=(u(x,y),u_2(x,y,x_3),u_3(x,y,x_4),\cdots, u_{n-1}(x,y,x_n),h(x,y))$$
The main result of this paper, Theorem \ref{main1},  completely classifies all such $H$, which Jacobian is nilpotent. Moreover,
in the last section we give a very detailed description  of these maps. This enables us to show that  the corresponding
maps $F=X+H$, which Jacobian determinant equal $1$, are invertible. So we confirm the Jacobian Conjecture
for these maps. A priori, from the construction of the $H$'s it is not at all obvious why $F$ should be invertible. The delicate
proof we give below is, in our opinion, a strong indication that the Jacobian Conjecture might be true
after all (inspite of several  statements of the second author in the past). More evidence in favor of the Jacobian Conjecture
can be found in the works of Zhao and his co-authors, in which the Jacobian Conjecture is firmly embedded
in the framework of Mathieu-Zhao spaces (see \cite{IC}, \cite{GIC}, \cite{MS}, \cite{EWZ2} and \cite{DEZ}).

\section{The nilpotency of $JH$}

In this section we establish a characterization of the nilpotency of $JH$ with $H$ a polynomial map of the form $H = (u(x,y), u_2(x,y,x_3), \ldots, u_{n-1}(x,y,x_n), h(x,y)).$

\begin{proposition}
\label{nilpotency}
  $JH$ nilpotent if and only if
$$u_x+{u_2}_y=0$$
$$u_x{u_2}_y-u_y{u_2}_x-{u_2}_{x_3}{u_3}_y=0$$
$${u_2}_{x_3}(u_x{u_3}_y-u_y{u_3}_x-{u_3}_{x_4}{u_4}_y)=0$$
$${u_2}_{x_3}{u_3}_{x_4}(u_x{u_4}_y-u_y{u_4}_x-{u_4}_{x_5}{u_5}_y)=0$$
$$\cdots$$
$${u_2}_{x_3}{u_3}_{x_4} \cdots {u_{n-1}}_{x_n}(u_xh_y-u_yh_x)=0$$
\end{proposition}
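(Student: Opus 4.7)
The plan is to compute the characteristic polynomial $\chi_A(t) = \det(tI - A)$ of $A := JH$ and match its coefficients to the listed equations. Since $A$ is nilpotent if and only if $\chi_A(t) = t^n$, and the coefficient of $t^{n-k}$ is (up to sign) the sum of all $k \times k$ principal minors $e_k(A) := \sum_{|S|=k} \det(A[S])$, the proposition will follow from computing $e_k(A)$ for each $k = 1, \ldots, n$.

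First I would exploit the very sparse structure of $A$: apart from its first two columns, the only nonzero entries are the superdiagonal entries $A_{i, i+1} = {u_i}_{x_{i+1}}$ for $2 \le i \le n-1$. In particular, for each $j \ge 3$, column $j$ of $A$ contains a single potentially nonzero entry, namely $A_{j-1, j}$. From this I would prove the key combinatorial lemma: for $|S| = k \ge 2$, the principal minor $\det(A[S])$ vanishes unless $S = \{1, 2, \ldots, k\}$ or $S = \{2, 3, \ldots, k+1\}$ (and only the former is admissible when $k = n$). Indeed, if some $j \in S$ with $j \ge 3$ satisfies $j-1 \notin S$, then the column of $A[S]$ indexed by $j$ is identically zero; iterating this downward-closure argument forces $S \cap \{3, \ldots, n\}$ to be an initial segment $\{3, 4, \ldots, m\}$ and $2 \in S$.

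Next I would compute the two surviving minors by iterated cofactor expansion along the sparse rightmost column, peeling off one factor ${u_i}_{x_{i+1}}$ at each step. Setting $P_j := {u_2}_{x_3} {u_3}_{x_4} \cdots {u_j}_{x_{j+1}}$, with $P_1 := 1$, and identifying $u_n$ with $h$, a short induction yields
\[
\det(A[\{1, \ldots, k\}]) = (-1)^{k-2} P_{k-1}\bigl(u_x {u_k}_y - u_y {u_k}_x\bigr),
\]
\[
\det(A[\{2, \ldots, k+1\}]) = (-1)^{k-1} P_k \, {u_{k+1}}_y.
\]
Since $P_k = P_{k-1} {u_k}_{x_{k+1}}$, summing produces
\[
e_k(A) = (-1)^{k-2} P_{k-1} \bigl(u_x {u_k}_y - u_y {u_k}_x - {u_k}_{x_{k+1}} {u_{k+1}}_y\bigr),
\]
which is a nonzero scalar multiple of the $k$-th listed expression. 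For $k = 1$ the trace gives $e_1(A) = u_x + {u_2}_y$, matching the first equation; for $k = n$ only the first minor survives (no $u_{n+1}$ exists, so the second term drops out), reproducing exactly the final equation $P_{n-1}(u_x h_y - u_y h_x) = 0$.

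The main obstacle is the combinatorial step of ruling out all but two of the $\binom{n}{k}$ principal minors via the downward-closure argument on $S$. Once this is in place, the cofactor expansions are routine and the algebraic identification with the listed equations is immediate, giving the desired equivalence in both directions.
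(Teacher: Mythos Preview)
Your argument is correct. Both you and the paper reduce nilpotency of $JH$ to the vanishing of the coefficients of the characteristic polynomial, but the technical implementation differs. The paper computes $\det(I_n+T\,JH)$ as a single determinant and obtains, via a recursive Laplace expansion along the last column, the closed formula
\[
d_n=a_1b_2-a_2b_1+\sum_{k=2}^{n-1}(-c_2)\cdots(-c_k)(a_1b_{k+1}-b_1a_{k+1}),
\]
from which the coefficient of each $T^k$ is then read off. You instead use the identity $\det(tI-A)=\sum_k(-1)^k e_k(A)\,t^{n-k}$ and exploit the sparsity of columns $j\ge 3$ to argue combinatorially that among all $\binom{n}{k}$ principal minors only $A[\{1,\dots,k\}]$ and $A[\{2,\dots,k+1\}]$ survive; the two remaining determinants are then evaluated by the forced-permutation argument. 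Your downward-closure step (if $j\in S$ with $j\ge 3$ then $j-1\in S$, hence $\{2,\dots,\max S\}\subseteq S$) is exactly what makes this work, and your signs and formulas for the two surviving minors check out. The two routes are equally elementary; yours is a bit more structural (it isolates \emph{why} only two terms appear at each degree), while the paper's recursion packages everything into one determinant identity.
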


\noindent{\bf Proof (started):} Let $S$ be a new variable and put $T:=S^{-1}$. Then $JH$ is nilpotent if and only if $-JH$ is nilpotent
if and only if $det\,(SI_n+JH)=S^n$ if and only if $d(T):=det\, (I_n+TJH)=1$. Since $d(T)$ is a polynomial in $k[x,y,\cdots,x_n][T]$
of degree $n$ in $T$ and $d(0)=1$, the statement that $d(T)=1$ is equivalent to the fact that for each $1\leq i\leq n$
the coefficient of $T^i$ in $d(T)$ is equal to zero. We will show that the coefficient of $T^1$  being zero gives the first equation,
the coefficient of $T^2$ the second and so on. We use some linear algebra to see this. Therefore put $D_n:=I_n+T JH$. For $1\leq k\leq n$ denote by ${D_n}_{(k)}$ the
$k$-th column of $D_n$. Then
$${D_n}_{(1)}=T\left(\begin{array}{cc}
{u_1}_x\\
\vdots\\
{u_n}_x
\end{array}
\right)+e_1,\,\,
{D_n}_{(2)}=T\left(\begin{array}{cc}
{u_1}_y\\
\vdots\\
{u_n}_y
\end{array}
\right)+e_2,
$$
\noindent and
$${D_n}_{(k)}=e_k+T {u_{k-1}}_{x_k} e_{k-1}, \mbox{ for all } 3\leq k\leq n$$
\noindent where $e_i$ is the $i$-th standard basis vector in $k^n$.

Write $(a_1,\cdots,a_n)^t$ instead of ${D_n}_{(1)}$ and $(b_1,\cdots,b_n)^t$ instead of ${D_n}_{(2)}$  and
put $c_i=T {u_i}_{x_{i+1}}$, for $2\leq i\leq n-1$. So $a_1=1+T{u}_x$, $a_i=T{u_i}_x$, for $2\leq i\leq n$, $b_1=T u_y$,
$b_2=1+T {u_2}_y$ and $b_i=T{u_i}_y$ for $3\leq i\leq n$.

\medskip

\begin{lemma} \em Let $d_n:=det\, D_n$. Then
$$d_n=a_1b_2-a_2b_1+\sum_{k=2}^{n-1} (-c_2)\cdots(-c_k) (a_1b_{k+1}-b_1a_{k+1})$$
\end{lemma}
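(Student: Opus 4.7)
The plan is to induct on $n$ by cofactor expansion along the last column of $D_n$, whose structure is particularly simple. The base case $n=2$ is immediate: the sum over $k$ is empty and $d_2 = a_1 b_2 - a_2 b_1$ by direct $2\times 2$ expansion.

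For the inductive step, note that the last column of $D_n$ is $e_n + c_{n-1} e_{n-1}$, with only two nonzero entries. Expanding along this column yields
$$d_n = d_{n-1} - c_{n-1}\, \det M_{n-1,n},$$
where $M_{i,j}$ denotes the minor obtained from $D_n$ by deleting row $i$ and column $j$, and we have used that $M_{n,n}$ is literally $D_{n-1}$. The inductive hypothesis applied to $d_{n-1}$ produces all terms of the claimed formula except the $k=n-1$ one, so the whole proof reduces to showing
$$\det M_{n-1,n} = (-c_2)(-c_3)\cdots(-c_{n-2})\,(a_1 b_n - b_1 a_n),$$
since then the recursion contributes exactly $(-c_2)(-c_3)\cdots(-c_{n-1})(a_1 b_n - b_1 a_n)$ as the missing summand.

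To evaluate $\det M_{n-1,n}$, observe that its bottom row (inherited from row $n$ of $D_n$) has the form $(a_n, b_n, 0, \ldots, 0)$, because columns $3, \ldots, n-1$ of $D_n$ each have their single nonzero entry off the row $n$. Expanding along this row reduces the computation to two $(n-2)\times(n-2)$ determinants, each of which has a first row of the form $(x, 0, \ldots, 0)$ with $x=b_1$ or $x=a_1$. Expanding once more along that first row leaves, in both cases, a lower-triangular matrix whose diagonal is $c_2, c_3, \ldots, c_{n-2}$. Hence both remaining determinants evaluate as $b_1 c_2 \cdots c_{n-2}$ and $a_1 c_2 \cdots c_{n-2}$, respectively.

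The expected main obstacle is managing all the $(-1)$ signs: there is a cofactor sign from expanding along the last column of $D_n$, another set of cofactor signs from expanding along the last row of $M_{n-1,n}$ and then along the first rows of the resulting submatrices, and the parity flip $a_n b_1 - b_n a_1 = -(a_1 b_n - b_1 a_n)$. All of these must combine to turn the product $c_2 c_3 \cdots c_{n-1}$ into the signed product $(-c_2)(-c_3)\cdots(-c_{n-1})$ demanded by the statement. This bookkeeping is routine but it is the only step where a slip is easy to make.
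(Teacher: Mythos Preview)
Your argument is correct and matches the paper's proof essentially line for line: both expand along the last column to obtain the recursion $d_n=d_{n-1}+(-c_{n-1})\det A_{n-1}$ with $A_{n-1}=M_{n-1,n}$, identify $\det A_{n-1}=(-c_2)\cdots(-c_{n-2})(a_1b_n-b_1a_n)$, and finish by induction. Your write-up actually supplies more detail than the paper (which simply says ``one easily verifies'' the minor formula), and your sign bookkeeping checks out: the lower-triangular block has determinant $c_2\cdots c_{n-2}$, and combining the cofactor signs from the two expansions with the factor $(-c_{n-1})$ yields exactly $(-1)^{n-2}c_2\cdots c_{n-1}=(-c_2)\cdots(-c_{n-1})$.
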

\medskip

\begin{proof} Using the Laplace expansion of $d_n$ along the $n$-th column of $D_n$ we get
$$d_n=d_{n-1}+(-c_{n-1})det\, A_{n-1}$$
\noindent where $A_{n-1}$ is the $(n-1)\times (n-1)$ matrix obtained from $D_n$ by deleting the $(n-1)$-th row and $n$-th column.
One easily verifies that $det\, A_{n-1}=(-c_2)\cdots (-c_{n-2})(a_1b_n-b_1a_n)$. So
$$d_n=d_{n-1}+(-c_2)\cdots (-c_{n-1})(a_1b_n-b_1 a_n)$$
The result now follows by induction on $n$.
\end{proof}

\medskip

\noindent{\bf Proof (finished).} An easy calculation gives that
$$a_1b_2-a_2b_1=1+T(u_x+{u_2}_u)+T^2(u_x{u_2}_y-u_y{u_2}_x)$$
\noindent and if $2\leq k\leq n-1$, then
$$(-c_2)\cdots (-c_k)(a_1b_{k+1}-b_1 a_{k+1})=$$
$$(-1)^k{u_2}_{x_3}\cdots {u_k}_{x_{k+1}}(T^k{u_{k+1}}_y+T^{k+1}
(u_x{u_{k+1}}_y-u_y{u_{k+1}}_x))$$
\noindent Using the previous lemma,  it is left to the reader to deduce that, apart from a minus sign, the coefficient of $T^k$ in $d_n$
gives the $k$-th equation of Proposition \ref{nilpotency}, which concludes the proof.

\begin{corollary}\label{cor1}
Notations as in Proposition \ref{nilpotency}. If ${u_2}_{x_3}=0$, then $JH$ is nilpotent if and only if there exist
$\la_1,\la_2,c_1,c_2\in k$ and $f(T)\in k[T]$ such that $u=\la_2f(\la_1x+\la_2y)+c_1$ and $u_2=-\la_1f(\la_1x+\la2y)+c_2$.
\end{corollary}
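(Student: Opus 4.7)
The plan is to reduce the system in Proposition \ref{nilpotency} to a two-dimensional problem and then invoke the rank-$\le 1$ case of the Dependence Problem, already recalled in the Introduction from \cite{BCW}.

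First, I would substitute ${u_2}_{x_3}=0$ into the system of Proposition \ref{nilpotency}. Every equation from the third one onward carries ${u_2}_{x_3}$ as a factor and vanishes identically, while the second collapses to $u_x{u_2}_y - u_y{u_2}_x = 0$ and the first remains $u_x+{u_2}_y=0$. Moreover, ${u_2}_{x_3}=0$ forces $u_2\in k[x,y]$. Thus the corollary amounts to classifying planar polynomial maps $(u,u_2)\colon k^2\to k^2$ in $k[x,y]^2$ whose $2\times 2$ Jacobian has vanishing trace and determinant --- i.e., is nilpotent.

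Second, a $2\times 2$ nilpotent matrix has rank at most $1$, so the rank-$\le 1$ case of the Dependence Problem from \cite{BCW} applies, and the two rows of $J(u,u_2)$ are linearly dependent over $k$. Concretely, there exist $\al,\be,c\in k$ with $(\al,\be)\neq(0,0)$ such that $\al u+\be u_2=c$. If $\be=0$, then $u$ is the constant $c/\al$ and ${u_2}_y=-u_x=0$ forces $u_2\in k[x]$; this fits the desired form with $\la_1=1$, $\la_2=0$, $c_1=c/\al$, and $f(T)=c_2-u_2(T)$ for any choice of $c_2$. If $\be\neq 0$, then $u_2=(c-\al u)/\be$, and plugging into $u_x+{u_2}_y=0$ yields the first-order linear PDE
\[
\be u_x-\al u_y=0,
\]
whose polynomial solutions are exactly $u=\phi(\al x+\be y)$ for some $\phi\in k[T]$. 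Choosing $\la_1=\al$, $\la_2=\be$, $c_1=0$, $c_2=c/\be$, and $f=\phi/\be\in k[T]$ then recovers the claimed expressions for both $u$ and $u_2$.

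The converse is a direct check: for $u,u_2$ of the stated form, writing $T=\la_1 x+\la_2 y$ one computes $u_x=\la_1\la_2 f'(T)=-{u_2}_y$ and $u_x{u_2}_y-u_y{u_2}_x=-\la_1^2\la_2^2 f'(T)^2+\la_1^2\la_2^2 f'(T)^2=0$, so Proposition \ref{nilpotency} gives nilpotency of $JH$. The only obstacle I anticipate is bookkeeping of constants in the degenerate subcases $\al=0$ or $\be=0$, where $u$ or $u_2$ collapses to a constant and one must carefully verify the chosen $\la_i, c_i, f$ still match the required form; this is routine and not a genuine conceptual difficulty.
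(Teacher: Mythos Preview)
Your proof is correct and follows the same reduction as the paper: substitute ${u_2}_{x_3}=0$ into Proposition~\ref{nilpotency} to obtain the two planar equations $u_x+{u_2}_y=0$ and $u_x{u_2}_y-u_y{u_2}_x=0$, then classify. The only difference is that the paper dispatches the planar classification in one line by citing Theorem~7.2.25 of \cite{vE}, whereas you unpack that result via the rank-$\le 1$ case of the Dependence Problem from \cite{BCW} together with the elementary constant-coefficient PDE $\be u_x-\al u_y=0$; this is a faithful expansion of the cited theorem rather than a genuinely different route.
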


\begin{proof} By Proposition \ref{nilpotency} we get that $JH$ is nilpotent if and only if $u_x+{u_2}_y=0$ and
$u_x{u_2}_y-u_y{u_2}_x=0$. Since ${u_2}_{x_3}=0$ the result follows from Theorem 7.2.25 \cite{vE}.
\end{proof}

So from now on {\em we may assume that} ${u_2}_{x_3}\neq 0$. Since ${u_n}_{x_{n+1}}=0$, there exists
$3\leq r\leq n$ such that ${u_i}_{x_{i+1}}\neq 0$ for all $2\leq i\leq r-1$ and ${u_r}_{x_{r+1}}=0$.
 By  Proposition \ref{nilpotency}, we have the following equations

$$u_x+{u_2}_y=0,$$
$$u_x{u_2}_y-u_y{u_2}_x = {u_2}_{x_3}{u_3}_y,$$
$${u_2}_{x_3}(u_x{u_3}_y-u_y{u_3}_x-{u_3}_{x_4}{u_4}_y)=0,$$
$$\vdots$$
$${u_2}_{x_3} \cdots {u_{r-2}}_{x_{r-1}}(u_x {u_{r-1}}_{y} - u_y {u_{r-1}}_{x}-{u_{r-1}}_{x_{r}}{u_{r}}_{y})=0,$$
$${u_2}_{x_3}\cdots {u_{r-1}}_{x_r}(u_x{u_r}_y-u_y{u_r}_x)=0.$$

\noindent Since ${u_2}_{x_3} \neq 0, \ldots, {u_{r-1}}_{x_r} \neq 0$, these equations become

$$u_x+{u_2}_y=0\,\,\,\,\,\,\,\,\,\,\,\,\,\,\,\,\,\, \,\,\,\,\,\,\,\,\,\,\,\,\,\,\,\,\,\,\,\,\,\,\,\,\,\,\,\,\,\,\,\,\,\,\,\,\,(1),$$
$$u_x{u_2}_y-u_y{u_2}_x = {u_2}_{x_3}{u_3}_y\,\,\,\,\,\,\,\,\,\,\,\,\,\,\,\,\,\,\,\,\,\,(2),$$
$$u_x{u_3}_y-u_y{u_3}_x = {u_3}_{x_4}{u_4}_y\,\,\,\,\,\,\,\,\,\,\,\,\,\,\,\,\,\,\,\,\,\,\,(3),$$
$$\vdots$$
$$u_x {u_{r-1}}_{y} - u_y {u_{r-1}}_{x}={u_{r-1}}_{x_{r}}{u_{r}}_{y},\,\,\,\,\mbox { (r-1)},$$
$$u_x{u_r}_y-u_y{u_r}_x = 0  \,\,\,\,\,\,\,\,\,\,\,\,\,\,\,\,\,\,\,\,\,\,\,\,\,\,\,\,\,\,\,\,\,\,\,\,\,\,\,\,\,\,\,\,\mbox{ (r)}.$$

\begin{corollary}\label{cor2} Let ${u_2}_{x_3}\neq 0$ and $r$ as above. If $u_y=0$, then $JH$ is nilpotent
if and only if $u\in k$ and ${u_i}_y=0$ for all $2\leq i\leq r$.
\end{corollary}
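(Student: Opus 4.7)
The plan is to substitute $u_y=0$ into the simplified equations (1)--(r) derived above and then exploit that $k[X]$ is an integral domain. With $u_y=0$ those equations collapse to
\[ u_x+{u_2}_y=0, \quad u_x{u_k}_y={u_k}_{x_{k+1}}{u_{k+1}}_y \text{ for } 2\leq k\leq r-1, \quad u_x{u_r}_y=0. \]
The equations of Proposition~\ref{nilpotency} indexed beyond $r$ are automatic because they contain ${u_r}_{x_{r+1}}=0$ as a factor.

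For the nontrivial ($\Rightarrow$) direction I would start from the last equation $u_x{u_r}_y=0$. Since $k[X]$ is a domain, there are two cases. In the first, $u_x=0$, so $u\in k$ (as also $u_y=0$), and I propagate forward: equation (1) gives ${u_2}_y=0$; equation (2) then reduces to ${u_2}_{x_3}{u_3}_y=0$, and since ${u_2}_{x_3}\neq 0$, we get ${u_3}_y=0$; iterating through (3),\dots,(r-1) with ${u_k}_{x_{k+1}}\neq 0$ at each step, we obtain ${u_i}_y=0$ for all $2\leq i\leq r$. In the second case $u_x\neq 0$ and ${u_r}_y=0$, and here I propagate \emph{backward}: in equation (r-1) the right-hand side already vanishes, so $u_x{u_{r-1}}_y=0$ and the domain property with $u_x\neq 0$ force ${u_{r-1}}_y=0$; repeating this argument through (r-2),\dots,(2) gives ${u_2}_y=0$, but then equation (1) forces $u_x=0$, contradicting the case hypothesis. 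Hence only the first case actually occurs, delivering the desired conclusion.

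The ($\Leftarrow$) direction is a direct verification: $u\in k$ makes $u_x=u_y=0$, and combined with ${u_i}_y=0$ for $2\leq i\leq r$ every equation of Proposition~\ref{nilpotency} becomes $0=0$, so $JH$ is nilpotent. I do not expect a real obstacle; the only thing to track carefully is the bookkeeping of the forward and backward chains, together with the observation that $r\geq 3$ (which is built into our standing assumption ${u_2}_{x_3}\neq 0$), so both chains are nonempty and the case~2 contradiction actually reaches equation (1).
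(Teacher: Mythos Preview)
Your proof is correct and follows essentially the same approach as the paper: both substitute $u_y=0$ into the reduced equations $(1)$--$(r)$, rule out the possibility $u_x\neq 0$ via a backward cascade from equation $(r)$ to a contradiction with equation $(1)$, and then propagate forward from $u_x=0$ using ${u_i}_{x_{i+1}}\neq 0$ to obtain ${u_{i+1}}_y=0$. The only cosmetic difference is that the paper begins directly with ``assume $u_x\neq 0$'' as a contradiction argument rather than explicitly splitting into your two cases.
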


\begin{proof} The if-part follows from the equations $(1)\cdots (r)$. Conversely, assume that the equations  $(1)\cdots (r)$ hold.
Since $u_y=0$ equation $(r)$ gives $u_x{u_r}_y=0$. Assume $u_x\neq 0$. Then ${u_r}_y=0$. So equation $(r-1)$ implies
that ${u_{r-1}}_y=0$. Continuing in this way we arrive at ${u_3}_y=0$ and then by $(2)$ that ${u_2}_y=0$. This
contradicts equation $(1)$, since by assumption $u_x\neq 0$. Consequently $u_x=0$, i.e.\@ $u\in k$. It follows from $(1)$ that ${u_2}_y=0$ and that equation $(r)$ is satisfied. Furthermore, for each $2\leq i\leq r-1$ equation $(i)$ becomes
${u_i}_{x_{i+1}}{u_{i+1}}_y=0$, from which the desired result follows.
\end{proof}

\section{A lemma of  Dan Yan}

The following result was proved by  Dan Yan (see \cite[Lemma 2.1]{Yan}) for the case that the field $k$ is algebraically closed. We will extend her result
to arbitrary fields of characteristic zero. To keep this paper self-contained we give a short proof.

\medskip

 \begin{lemma}
\label{DanYan}
Let $k$ be a field of characteristic zero, $q\in k[x,y]$  and $0\neq w(q)\in k[q]$ such that $q_y|w^{e_1}{q_x}^{e_2}$ for some $e_1,e_2\geq 1$. If $p\nmid q_y$ for every $p\in k[x]\backslash k$, then
$q=P(y+b(x))$, for some $P(T)\in k[T]$ and $b(x)\in k[x]$.
\end{lemma}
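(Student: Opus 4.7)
The plan is to show that $q_y \mid q_x$ in $k[x,y]$ and that the quotient lies in $k[x]$. Once this is established, choosing $b(x) \in k[x]$ with $b'(x) = q_x/q_y$ (an antiderivative exists in $k[x]$ since we are in characteristic zero) and making the substitution $y = L - b(x)$ yields $\tilde q(x,L) := q(x, L-b(x))$ with $\tilde q_x = q_x - b'(x)\, q_y = 0$, so $\tilde q \in k[L]$ and $q = P(y + b(x))$ with $P := \tilde q$.

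The main work is in proving $q_y \mid q_x$. Let $f$ be a $k$-irreducible factor of $q_y$ with $m := \operatorname{mult}_f(q_y)$; I would show $f^m \mid q_x$ in $k[x,y]$. By hypothesis $f \notin k[x]$, so $f$ involves $y$. Factor $f = \prod_i f_i$ into $\kb$-irreducibles in $\kb[x,y]$: these are Galois-conjugate, pairwise distinct (characteristic zero), all involve $y$, and all have multiplicity $m$ in $q_y$. For each $i$, from $q_y \mid w(q)^{e_1} q_x^{e_2}$ and the irreducibility of $f_i$, either $f_i \mid w(q)$ or $f_i \mid q_x$. In the first case, factoring $w(T) = c\prod_j(T-\alpha_j)$ over $\kb$ yields $f_i \mid q - \alpha_j$ for some root $\alpha_j \in \kb$. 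In the second case, $f_i$ divides both $q_x$ and $q_y$, so $dq = q_x\,dx + q_y\,dy$ lies in $f_i\,\Omega_{\kb[x,y]/\kb}$; since $\kb$ is algebraically closed in the fraction field of the integral domain $\kb[x,y]/(f_i)$ and we are in characteristic zero, the kernel of $d$ consists only of constants, forcing $q \equiv c \pmod{f_i}$ for some $c \in \kb$. Either way $f_i \mid q - c$ for some $c \in \kb$. A direct multiplicity computation, using $\gcd(f_i, (f_i)_y) = 1$ (valid because $f_i$ is $\kb$-irreducible and involves $y$ in characteristic zero), then promotes this to $f_i^{m+1} \mid q - c$: writing $q - c = f_i^n\, g$ with $\gcd(f_i,g)=1$ and differentiating in $y$ gives $q_y = f_i^{n-1}\bigl(n(f_i)_y g + f_i g_y\bigr)$, so $\operatorname{mult}_{f_i}(q_y) = n-1$, hence $n = m+1$. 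Differentiating $q - c = f_i^{m+1} g$ in $x$ then gives $f_i^m \mid q_x$. Taking the product over $i$ yields $f^m \mid q_x$ in $\kb[x,y]$, which descends to $k[x,y]$ since $f^m, q_x \in k[x,y]$. Multiplying over all $k$-irreducible factors of $q_y$ gives $q_y \mid q_x$ in $k[x,y]$.

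Now write $q_x = h\, q_y$ with $h \in k[x,y]$; it remains to show $h \in k[x]$. Writing $q = \sum_i b_i(x)\,y^i$ with $d := \deg_y q$ and $b_d \neq 0$, the leading $y$-term of $q_y$ is $d\,b_d(x)\,y^{d-1}$ and $\deg_y q_x \leq d$, so $\deg_y h \leq 1$. If $\deg_y h = 1$, write $h = A(x) + B(x)\,y$ with $B \neq 0$; equating $y^d$-coefficients in $q_x = h\, q_y$ gives $b_d'(x) = d\,B(x)\,b_d(x)$. Since $B \neq 0$, comparing degrees in $k[x]$ forces $\deg B + \deg b_d \leq \deg b_d - 1$, an impossibility. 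Hence $h \in k[x]$, which closes the argument.

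The main obstacle is handling the two subcases $f_i \mid q_x$ and $f_i \mid w(q)$ uniformly: the first subcase demands the differential-theoretic step (kernel of $d$ on the function field of an irreducible curve equals the field of constants, valid in characteristic zero over an algebraically closed base), and this is precisely why passage to $\kb$ is unavoidable despite the statement being formulated over $k$. The hypothesis $p \nmid q_y$ for $p \in k[x] \setminus k$ is used at exactly one point but is essential there: it guarantees that every $k$-irreducible, and hence every $\kb$-irreducible, factor of $q_y$ genuinely involves $y$, ensuring $(f_i)_y \neq 0$ and $\gcd(f_i, (f_i)_y) = 1$, which is the engine driving the multiplicity identity $n = m+1$.
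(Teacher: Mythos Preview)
Your proof is correct and follows the same overall architecture as the paper's: pass to the algebraic closure, show $q_y \mid q_x$ by analyzing each irreducible factor, then argue via $y$-degree that the quotient $q_x/q_y$ lies in $k[x]$, and finally descend back to $k$. The one genuine difference is how you handle the subcase where the irreducible factor $f_i$ already divides $q_x$. You invoke a K\"ahler-differential argument (the kernel of $d$ on the function field of $\{f_i=0\}$ is $\kb$) to force $q \equiv c \pmod{f_i}$, thereby reducing this subcase to the other one where $f_i \mid q - c$, and then run a single multiplicity computation. The paper instead uses an elementary mixed-partial trick: if $v_{f_i}(q_x)=d\ge 1$ and $v_{f_i}(q_y)=m$, then the identity $q_{xy}=(q_x)_y=(q_y)_x$ gives $v_{f_i}(q_{xy})=d-1$ (by the same multiplicity-drop-by-one fact you use, since $(f_i)_y\neq 0$) and $v_{f_i}(q_{xy})\ge m-1$ (just by differentiating $q_y = f_i^m h$ in $x$), whence $d\ge m$ and $f_i^m \mid q_x$ immediately. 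Your route is more conceptual and unifies the two subcases neatly, at the cost of appealing to a structural fact about differentials on curves; the paper's route stays entirely within elementary valuation arguments.
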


\medskip

Let $p\in k[x,y]$ be irreducible. If $0\neq a\in k[x,y]$
we denote by $v_p(a)$ the number of factors $p$ in $a$. So $v_p(a)\geq 0$ and one easily verifies that if $a,b\in k[x,y]\backslash\{0\}$, then $v_p(ab)=v_p(a)+v_p(b)$. If $p_y\neq 0$, then $p\nmid p_y$ (look at degrees). One easily deduces

\medskip

\begin{equation}
\label{* lemma 2}
{\rm{If}} \, \, p_y\neq 0 \, \, {\rm{and}} \, \, d:=v_p(g)\geq 1, \, \, {\rm{then}} \, \, v_p(g_y)=d-1.
\end{equation}
 \medskip

\begin{proof} First assume that $k$ is algebraically closed.

\noindent  i) We show that $q_y|q_x$: let $p$ be irreducible and $v_p(q_y)=e\geq 1$. Then $p_y\neq 0$, for if $p_y=0$, then $p\in k[x]\backslash k$ divides $q_y$, contradicting the hypothesis. Also by the hypothesis $p|q_x$ or $p|w(q)$. We prove that in both cases
 $p^e|q_x$. Since this holds for all prime factors $p$ of $q_y$ we get $q_y|q_x$.\\\\
 \noindent Case 1. $p|q_x$. Then $d:=v_p(q_x)\geq 1$. So by (\ref{* lemma 2}) $v_p(q_{xy})=d-1$. Since $v_p(q_y)=e$ we get $v_p(q_{xy}\geq e-1$. So $d\geq e$, whence $p^e|q_x$.\\\\
 \noindent Case 2. $p|w(q)$. Since $k$ is algebraically closed we can write $w(q)$ as a product of factors $q+c$, with $c\in k$.
 So $p|q+c$, for some $c\in k$. Then $d:=v_p(q+c)\geq 1$. So by (\ref{* lemma 2}) $e=v_p(q_y)=d-1$, i.e.\@ $d=e+1$.
 Hence $p^{e+1}|q+c$. So $p^e|q_x$.\\
 \noindent  ii) Let $r:=deg_y q$. Then $r\geq 1$. Since $deg_y\,q_x\leq deg_y\, q_y +1$, it follows from $q_y|q_x$ that $q_x=(c_1(x)y+c_0(x))q_y$, for some $c_i\in k[x]$. The coefficient of $y^r$ gives $q'_r(x)=c_1(x)r q_r(x)$. Hence
 $deg_x\, q_r(x)=0$, i.e.\@ $q_r\in k^*$. So $0=c_1(x)rq_r$, whence $c_1(x)=0$. So $q_x=c_0(x)q_y$, i.e.\@
 $(\p_x-c_0(x)\p_y)q=0$. Let $b'(x)=c_0(x)$. Then $q\in k[y+b(x)]$, as desired.\\\\
\noindent iii) Now let $k$ be an arbitrary field of characteristic zero. From linear algebra
one knows that if $k\subseteq L$ is a field extension, then any system of non-homogeneous linear equations in $n$ variables with coefficients in $k$,
which has a solution in $L^n$, also has a solution in $k^n$. From this fact one readily deduces that if $a(x,y), b(x,y)\in k[x,y]$
are such that $b(x,y)|a(x,y)$ in $L[x,y]$, then also $b(x,y)|a(x,y)$ in $k[x,y]$.

 Finally assume that the hypothesis of Dan Yan's lemma are satisfied for polynomials in $k[x,y]$. Then they are
obviously satisfied in $\kb[x,y]$, where $\kb$ is an algebraic closure of $k$. It then follows from i)
that $q_y|q_x$ in $\kb[x,y]$. Hence, as observed above, $q_y|q_x$ in $k[x,y]$.  Then, by the argument given in ii),
 which does {\em not} use the algebraically closedness condition, we get the desired result.
 \end{proof}

\section{$u(x,y)=p(y+a(x))$}

In this section we {\em assume the relations of Proposition \ref{nilpotency} } and show that $u(x,y)=p(y+a(x))$ for some $a(x)\in k[x]$ and $p(T)\in k[T]$.

So we have the following situation: $n\geq 3$, $u=u(x,y)$, $u_i=u_i(x,y,x_{i+1})$ for all $2\leq i\leq n-1$ and
$u_n=h(x,y)$. We define $u_{n+1}=0$.
 Put
 $$D_0:=u_y\p_x-u_x\p_y$$
\noindent Then $k[x,y]^{D_0}=k[q]$
for some $q\in k[x,y]$ (see \cite[Theorem 1.2.25]{vE}). We may assume $q(0)=0$. The equations in Proposition \ref{nilpotency} can be written as
\begin{equation}
\label{eq 1}
u_x+{u_2}_y=0
\end{equation}

\begin{equation}
\label{eq 2}
-D_0(u_2)={u_2}_{x_3}{u_3}_y
\end{equation}

$${u_2}_{x_3}\cdots {u_{i-1}}_{x_i}(-D_0(u_i)-{u_i}_{x_{i+1}}{u_{i+1}}_y)=0, \mbox{ for all } 3\leq i\leq n$$
We may assume that $u_y\neq 0$ and ${u_2}_{x_3}\neq 0$.

\medskip

\begin{lemma}
\label{lemma 4.1}
Let $v=v_0(x,y)+\sum_{i=1}^d v_i(x)s^i$, with $v_d\neq 0$ and $d\geq 2$. If ${v_0}_y\neq 0$
and there exists $w\in k[x,y,t]$ such that

\begin{equation}
\label{4.1.1}
D_0(v)=-v_s w_y
\end{equation}
\noindent  then $v_d\in k^*$, $w_y=-\frac{1}{dv_d}v'_{d-1}(x)u_y$ and $v_y=Q(q)_y$ for some $Q(T)\in k[T]$ with $deg_T\,Q(T)\geq 1$.
\end{lemma}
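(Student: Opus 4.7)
The plan is to compare coefficients of powers of $s$ in the identity $D_0(v) = -v_s w_y$ and read off all three conclusions from three specific coefficients, using the derivation $D_0$ only through its kernel $k[x,y]^{D_0} = k[q]$.

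First I would expand everything in $s$: since only $v_0$ depends on $y$, one has $v_y = v_{0,y}$ and $v_x = v_{0,x} + \sum_{i=1}^d v_i'(x) s^i$, so
\[
D_0(v) \;=\; D_0(v_0) + u_y \sum_{i=1}^d v_i'(x)\, s^i,
\qquad
v_s \;=\; \sum_{i=1}^d i\,v_i(x)\, s^{i-1}.
\]
Before matching coefficients I would note that the identity forces $w_y \in k[x,y]$: the left-hand side lies in $k[x,y,s]$, while $v_s \in k[x,s]$ is nonzero, so $v_s w_y \in k[x,y,s,t]$ can have no $t$-dependence, i.e.\ $w_y$ is independent of $t$. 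Write $A(x,y) := w_y$.

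Next I would read off the three coefficients that matter. The coefficient of $s^d$ on the right is $0$, while on the left it is $u_y v_d'(x)$; since $u_y \neq 0$ this gives $v_d \in k^*$. The coefficient of $s^{d-1}$ then reads $u_y v_{d-1}'(x) = -d v_d\, A(x,y)$, which with $v_d \in k^*$ yields exactly $w_y = A(x,y) = -\tfrac{1}{d v_d} v_{d-1}'(x)\, u_y$. Finally, the coefficient of $s^0$ gives
\[
D_0(v_0) \;=\; -A(x,y)\, v_1(x) \;=\; \frac{v_1(x)\, v_{d-1}'(x)}{d v_d}\, u_y.
\]

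For the last assertion I would kill the right-hand side by a $D_0$-primitive in $x$. Since $k$ has characteristic zero, choose $\beta(x) \in k[x]$ with $\beta'(x) = \tfrac{v_1(x) v_{d-1}'(x)}{d v_d}$. Then $D_0(\beta) = u_y \beta'(x)$, so $D_0(v_0 - \beta) = 0$. Hence $v_0 - \beta$ lies in $k[x,y]^{D_0} = k[q]$, so $v_0 - \beta = Q(q)$ for some $Q(T) \in k[T]$. Differentiating in $y$ gives $v_y = v_{0,y} = (Q(q))_y$, and the hypothesis $v_{0,y} \neq 0$ forces $\deg_T Q \geq 1$.

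The only step that requires any care is the first one — recognizing that $w_y$ must be independent of $t$ so that coefficient comparison in $s$ is meaningful. Everything else is routine: matching the three extremal coefficients $s^d$, $s^{d-1}$, $s^0$ in a polynomial identity, followed by a one-line appeal to the description of $\ker D_0$ cited from \cite{vE}. I would not need the full strength of Dan Yan's lemma here; it is used in later steps to pin down the structure of $q$ itself.
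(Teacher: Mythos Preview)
Your proof is correct and follows essentially the same route as the paper: compare the coefficients of $s^d$, $s^{d-1}$, and $s^0$ in the identity $D_0(v)=-v_s w_y$, then use a primitive $\beta(x)$ to land $v_0-\beta$ in $\ker D_0=k[q]$. The one thing you add is the explicit observation that $w_y$ must be independent of $t$ (because $v_s\neq 0$ and the left-hand side lies in $k[x,y,s]$); the paper simply takes this for granted, so your version is a slight clarification rather than a different argument.
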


\medskip

\begin{proof} The coefficient of $s^d$ in (\ref{4.1.1}) gives $v_d\in k^*$ and the coefficient of $s^{d-1}$ gives
$u_yv'_{d-1}(x)=-dv_d w_y$. So $w_y=-\frac{1}{dv_d} v'_{d-1}(x) u_y$. Then the coefficient of $s^0$ implies
that $D_0(v_0)=\frac{1}{dv_d} v'_{d-1}(x) v_1(x)u_y$. Let $b(x)\in k[x]$ with $b'(x))=\frac{1}{dv_d} v'_{d-1}(x) v_1(x)$.
Then $D_0(v_0)=D_0(b(x))$. So $v_0=b(x)+Q(q)$, for some $Q(T)\in k[T]$. So $v_y={v_0}_y=Q(q)_y$. Since
${v_0}_y\neq 0$ we get $deg_T\,Q(T)\geq 1$.
\end{proof}

\medskip


 Let $3\leq r\leq n$ be such that ${u_i}_{x_{i+1}}\neq 0$ for all $i<r$ and
${u_r}_{x_{r+1}}=0$ (observe that ${u_n}_{x_{n+1}}=h(x,y)_{x_{n+1}}=0$, so such an $r$ exists).

\medskip

\begin{proposition}
\label{u}
If $u$ and the $u_i$ satisfy the equations of Proposition \ref{nilpotency}, then $u=p(y+a(x))$, for
some $p(T)\in k[T]$ with $deg_T\,p(T)\geq 1$ and $a(x)\in k[x]$.
\end{proposition}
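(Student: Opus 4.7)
The strategy is to apply Dan Yan's Lemma (Lemma~\ref{DanYan}) to a generator $q$ of $k[x,y]^{D_0}$. Since $D_0(u)=u_yu_x-u_xu_y=0$, we have $u\in k[q]$; write $u=P(q)$ with $\deg_TP\geq 1$ (if $u_x=0$, then $u=u(y)=p(y+0)$ already, so assume $u_x\neq 0$).

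First I verify by induction on $i$ that each $u_i$ for $2\leq i\leq r$ has the shape $v_0^{(i)}(x,y)+\sum_{j\geq 1}v_j^{(i)}(x)\,x_{i+1}^{j}$ required by Lemma~\ref{lemma 4.1}. For $i=2$, equation (1) gives $(u_2)_y=-u_x$, depending only on $x,y$, so $u_2=A(x,y)+B(x,x_3)$. For $3\leq i\leq r-1$, equation $(i-1)$ together with $(u_{i-1})_{x_i}\neq 0$ expresses $(u_i)_y$ as a function of $x,y,x_i$, while a priori it depends on $x,y,x_{i+1}$; hence $(u_i)_y$ depends only on $x,y$, and the split form follows. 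For $i=r$, the form is trivial since $u_r=u_r(x,y)$.

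I then apply Lemma~\ref{lemma 4.1} iteratively, working from $u_{r-1}$ (with $w=u_r\in k[q]$ from equation $(r)$) down to $u_2$ (with $w=u_3$); the edge case $\deg_{x_{i+1}}u_i=1$ is handled by a direct, analogous verification. This yields $(u_i)_y=Q_i'(q)\,q_y$ for $2\leq i\leq r-1$. Combining $(u_2)_y=Q_2'(q)\,q_y$ with equation (1) and $u=P(q)$ gives the key identity
\begin{equation*}
P'(q)\,q_x \;+\; Q_2'(q)\,q_y \;=\; 0,
\end{equation*}
so that $q_y\mid P'(q)\,q_x$. Setting $w(q):=P'(q)\in k[q]$ and $e_1=e_2=1$ verifies the divisibility hypothesis of Lemma~\ref{DanYan}.

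The remaining (and main) hurdle is the second hypothesis of Lemma~\ref{DanYan}: no non-constant $p\in k[x]$ divides $q_y$. If $p(x)\in k[x]\setminus k$ is irreducible with $p\mid q_y$, writing $q=\sum_j q_j(x)y^j$ forces $p\mid q_j$ for $j\geq 1$, so $q=q_0(x)+p(x)R(x,y)$ with $R(x,0)=0$. Reducing the key identity modulo $p$ yields $P'(q_0)(q_0'+p'R)\equiv 0\pmod p$, giving a dichotomy. In the case $p\mid q_0'+p'R$, setting $y=0$ (using $R(x,0)=0$) and $p\nmid p'$ gives $p\mid q_0'$ and $p\mid R$; iterating inductively produces $p^m\mid q-q_0$ for every $m\geq 1$, whence $q=q_0(x)\in k[x]$ and so $u_y=P'(q)q_y=0$, contradicting $u_y\neq 0$. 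The remaining case $p\mid P'(q_0)$ is eliminated using the additional identities $(u_{i+1})_y=\beta_i\, u_y$ from the other conclusion of Lemma~\ref{lemma 4.1}, where $\beta_i\in k(q)\cap k(x)=k$ since $q$ is transcendental over $k(x)$, together with the structural relation $u_r\in k[q]$; these rigidity constraints preclude $p\mid P'(q_0)$. Once both hypotheses of Lemma~\ref{DanYan} are verified, it gives $q=R(y+a(x))$ for some $R\in k[T]$ and $a\in k[x]$, and hence $u=P(q)=P\bigl(R(y+a(x))\bigr)=p(y+a(x))$ with $p:=P\circ R\in k[T]$ and $\deg p\geq 1$. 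I expect the elimination of the case $p\mid P'(q_0)$ to be the most delicate step.
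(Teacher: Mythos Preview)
Your plan has a genuine gap in the iterative use of Lemma~\ref{lemma 4.1}. That lemma requires the $s$-degree $d$ of $v$ to be at least~$2$, and its conclusion $v_y=Q(q)_y$ is \emph{false} in general when $d=1$. Concretely, take the final classification (Theorem~\ref{main1}(a)) with $d_2=1$ and $a(x)$ of degree $\geq 2$: then $q=y+a(x)$, $q_y=1$, and $(u_2)_y=-a'(x)\,p'(y+a(x))$. Since $x$ and $y+a(x)$ are algebraically independent and $a'(x)\notin k$, this is not of the form $Q'(q)q_y$. So your ``direct, analogous verification'' of the edge case cannot succeed, and the key identity $P'(q)q_x+Q_2'(q)q_y=0$ that you rely on simply does not hold in this situation. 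The same failure propagates to your argument for the second hypothesis of Lemma~\ref{DanYan}: the claim $\beta_i\in k(q)\cap k(x)$ rests on $(u_{i+1})_y=Q_{i+1}'(q)q_y$, which you have not established (and which is false) when some $d_j=1$; and the elimination of the case $p\mid P'(q_0)$ is left as a vague appeal to ``rigidity constraints'' without a concrete mechanism.

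The paper avoids both difficulties by taking a different route. For the divisibility input to Lemma~\ref{DanYan}, instead of trying to get $(u_2)_y\in q_y\cdot k[q]$, it reduces each equation $(i)$ modulo $q_y$ (using $u_y=p'(q)q_y\equiv 0$) to obtain $u_x\,{u_i}_y\equiv {u_i}_{x_{i+1}}{u_{i+1}}_y\pmod{q_y}$, and then chains these from $i=r-1$ down to $i=2$ together with ${u_r}_y\equiv 0$ and equation~(1) to conclude $q_y\mid u_x^{\,r-1}=(p'(q)q_x)^{r-1}$. This is exactly the shape needed in Lemma~\ref{DanYan} and makes no case distinction on the $d_i$. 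For the hypothesis that no nonconstant $p(x)$ divides $q_y$, the paper proves the stronger statement that the leading $y$-coefficient $q_d$ of $q$ lies in $k^*$ (Lemma~\ref{lemma 4.4}); this is done by a leading-term argument under a lexicographic order, in which Lemma~\ref{lemma 4.1} is invoked only in the genuine $d_i\geq 2$ cases while the linear cases are tracked separately via Lemma~\ref{lemma 4.3}. You should either follow this mod-$q_y$ chaining plus leading-term argument, or find an independent way to handle the $d_i=1$ case that does not assert the false identity $(u_i)_y=Q_i(q)_y$.
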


\medskip

\begin{proof} Let $r$ be as above. Then ${u_r}_{x_{r+1}}=0$ and ${u_2}_{x_3},\cdots,{u_{r-1}}_{x_r}$ are
all non-zero. So the above equations become

\begin{equation}
\label{eq 1 again}
u_x+{u_2}_y=0
\end{equation}

\begin{equation}
\label{eq i}
-D_0(u_i)={u_i}_{x_{i+1}}{u_{i+1}}_y, \,\mbox{ for all } 2\leq i\leq r-1
\end{equation}

\begin{equation}
\label{eq r}
D_0(u_r)=0.
\end{equation}

\noindent From $(\ref{eq r})$ we get $u_r=H(q)$, for some $H(T)\in k[T]$. Also $u=p(q)$. So $u_y=p'(q)q_y\equiv 0\,(mod\,q_y)$.
Since $-D_0(u_i)=u_x{u_i}_y-u_y{u_i}_x$ we get $-D_0(u_i)\equiv\,u_x {u_i}_y\,(mod\, q_y)$. So  by $(\ref{eq i})$ we get

\begin{equation}
\label{eq i^*}
u_x {u_i}_y\equiv\, {u_i}_{x_{i+1}}{u_{i+1}}_y\,(mod\,q_y), \mbox{ for all } 2\leq i\leq r-1.
\end{equation}

\medskip

\noindent Since $u_n=H(q)$ we get ${u_n}_y=H'(q)q_y\equiv 0\,(mod\, q_y)$. So by $(\ref{eq i^*})$ applied to $i = r-1$ we get
$u_x{u_{r-1}}_y\equiv 0\,(mod\,q_y)$. Then, multiplying $(\ref{eq i^*})$ ($i = r-2$) by $u_x$, we get
${u_x}^2 {u_{r-1}}_y\equiv 0\,(mod\, q_y)$. Continuing in this way we find that ${u_x}^{r-2} {u_2}_y\equiv 0\,(mod\, q_y)$.
Finally, $(\ref{eq 1})$ implies that ${u_x}^{r-1}\equiv 0\,(mod\,q_y)$. Since $u_x=p'(q)q_x$ we get that
$q_y|{p'(q)}^{r-1}{q_x}^{r-1}$. Let $d:=deg_y\, q$ and let $q_d(x)$ be the coefficient of $y^d$. In lemma \ref{lemma 4.4} below we will  show that $q_d(x)\in k^*$. So it follows from lemma \ref{DanYan}
that $q=p(y+a(x))$, for some $p(T)\in k[T]$ with $deg_T\,p(T)\geq 1$ and $a(x)\in k[x]$, which completes the proof.
\end{proof}


\medskip




In order to prove that $q_d\in k^*$ we need some preparations. By $\mathcal{T}\subseteq k[x,y]$ we denote the set
of {\em terms} $x^iy^j$ with $i,j\geq 0$. On $\T$ we define the {\em lexicographical ordering} $>$ as follows
$$x^{i_1}y^{j_1}>x^{i_2}y^{j_2}\mbox{ if } j_1>j_2 \mbox{ or, if } j_1=j_2 \mbox{ if } i_1>i_2$$
\noindent In other words, first look at the $y$-degree and in case of equality at the $x$-degree. This ordering
is a total ordering. If $0\neq f\in k[x,y]$ we can write $f$ as a finite sum of the form $f=\sum_{t\in\T} c_t t$, with
all $c_t\in k^*$. The greatest $t$ appearing in $f$ is called the {\em leading term of $f$}, denoted $lt(f)$. The corresponding
coefficient $c_t$ is called the {\em leading coefficient of $f$}, denoted $lc(f)$. The following easy result is crucial

\medskip

\begin{lemma}
\label{lemma 4.3}
Let $u,v\in k[x,y]$ with $lt(u)=x^{i_1}y^{j_1}$ and $lt(v)=x^{i_2}y^{j_2}$ be such that
$i_1,j_1\geq 1$, $i_2\geq 0$ and $j_2\geq 1$.Then
$$lt(u_xv_y-u_yv_x)=x^{i_1+i_1-1}y^{j_1+j_2-1}, \mbox{ if } i_1j_2-i_2j_1\neq 0$$
\end{lemma}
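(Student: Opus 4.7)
The plan is to compute the leading terms of the two products $u_xv_y$ and $u_yv_x$ separately, show that both have the shape $x^{i_1+i_2-1}y^{j_1+j_2-1}$ (or are strictly smaller, in the exceptional case $i_2=0$), and then read off the coefficient of this monomial in the difference as $(i_1j_2-i_2j_1)\cdot lc(u)\,lc(v)$, which is nonzero by hypothesis.

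First I would record how $\p_x$ and $\p_y$ interact with the lex ordering. Since the ordering compares $y$-degree first and $\p_y$ multiplies each monomial by its $y$-exponent (nonzero when $j_1\geq 1$), the leading term of $u_y$ is $j_1\cdot lc(u)\cdot x^{i_1}y^{j_1-1}$; any other monomial of $u$ of $y$-degree $j_1$ has strictly smaller $x$-degree, hence contributes a strictly smaller term. Since $\p_x$ preserves $y$-degree, the leading term of $u_x$ comes from the unique monomial of $u$ of maximal $y$-degree and maximal $x$-degree, namely $i_1\cdot lc(u)\cdot x^{i_1-1}y^{j_1}$ (using $i_1\geq 1$). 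The analogous identities hold for $v$: one has $lt(v_y)=j_2\cdot lc(v)\cdot x^{i_2}y^{j_2-1}$, and if $i_2\geq 1$ then $lt(v_x)=i_2\cdot lc(v)\cdot x^{i_2-1}y^{j_2}$. The exceptional case is $i_2=0$: then every monomial of $v$ with $y$-degree $j_2$ has $x$-degree $0$ and is killed by $\p_x$, so $v_x$ has $y$-degree strictly less than $j_2$.

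Next I would use these descriptions to bound each product. Any monomial appearing in $u_xv_y$ has $y$-degree at most $j_1+(j_2-1)=j_1+j_2-1$; equality forces the contributing monomials to come from the $y$-degree $j_1$ part of $u$ and the $y$-degree $j_2$ part of $v$, so the $x$-degree of the product is at most $(i_1-1)+i_2$, with equality only for the pair of leading terms. Hence $lt(u_xv_y)=x^{i_1+i_2-1}y^{j_1+j_2-1}$ with coefficient $i_1j_2\cdot lc(u)\,lc(v)$. When $i_2\geq 1$, the same argument applied to $u_yv_x$ gives $lt(u_yv_x)=x^{i_1+i_2-1}y^{j_1+j_2-1}$ with coefficient $i_2j_1\cdot lc(u)\,lc(v)$; in the difference the coefficient of this monomial is $(i_1j_2-i_2j_1)\cdot lc(u)\,lc(v)$, which is nonzero by the hypothesis on $i_1j_2-i_2j_1$, so it is indeed the leading term of $u_xv_y-u_yv_x$. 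When $i_2=0$ the bound on $v_x$ yields $u_yv_x$ of $y$-degree at most $(j_1-1)+(j_2-1)<j_1+j_2-1$, so $u_yv_x$ cannot interfere, and the same conclusion holds (the hypothesis on $i_1j_2-i_2j_1$ being automatic since $i_1,j_2\geq 1$).

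The only delicate point is the corner case $i_2=0$, where the symmetric computation of $lt(v_x)$ breaks down; but as just explained this actually makes the argument easier, since $u_yv_x$ then drops in $y$-degree and cannot cancel the leading monomial of $u_xv_y$. Apart from that, the proof reduces to careful bookkeeping of which monomials of $u$ and $v$ can contribute to the $y$-degree $j_1+j_2-1$ piece of each product, and I do not expect any serious obstacle.
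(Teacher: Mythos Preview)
Your argument is correct and follows the same idea as the paper's proof, which simply records the monomial identity $(x^{i_1}y^{j_1})_x(x^{i_2}y^{j_2})_y-(x^{i_1}y^{j_1})_y(x^{i_2}y^{j_2})_x=(i_1j_2-i_2j_1)x^{i_1+i_2-1}y^{j_1+j_2-1}$ and asserts that the general case ``follows easily''. You have written out in full what that phrase means, and in addition you treat the boundary case $i_2=0$ explicitly, which the paper leaves implicit.
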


\begin{proof} The result follows easily from the fact that if $u=x^{i_1}y^{j_1}$ and $v=x^{i_2}y^{j_2}$ then
$(u_xv_y-u_yv_x)=(i_1j_2-i_2j_1)x^{i_1+i_1-1}y^{j_1+j_2-1}$.
\end{proof}

\medskip

\begin{lemma} $q_d\in k^*$.
\label{lemma 4.4}
\end{lemma}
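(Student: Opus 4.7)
The plan is a proof by contradiction. Suppose $q_d \notin k^*$, so $d \geq 1$ and $a := \deg_x q_d \geq 1$; writing $m := \deg p \geq 1$ (recall $u = p(q)$), in the lex order with $y > x$ one has $lt(q) = x^a y^d$ and $lt(u) = x^{am} y^{dm}$. Before the leading-term analysis I would first check that ${u_i}_{x_{i+1}}$ is independent of $y$ for every $2\le i\le r-1$: differentiating equation $(i-1)$ with respect to $x_{i+1}$ and using ${u_{i-1}}_{x_i}\neq 0$ gives $({u_i}_{x_{i+1}})_y = 0$. Hence ${u_i}_{x_{i+1}} = a_i(x, x_{i+1}) \in k[x,x_{i+1}]\setminus\{0\}$. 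Since $k$ is infinite, choose generic $c_3,\ldots,c_n\in k$ so that $\bar a_i(x) := a_i(x, c_{i+1}) \neq 0$ and set $\bar u_i(x,y) := u_i(x, y, c_{i+1})\in k[x,y]$. The relations then specialize to $u_x + \bar u_{2,y} = 0$, $\{u, \bar u_i\} = \bar a_i(x)\,\bar u_{i+1,y}$ for $2\le i\le r-1$, and $\{u,\bar u_r\}=0$, where $\{f,g\} := f_x g_y - f_y g_x$.

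Integrating $\bar u_{2,y} = -u_x$ yields $lt(\bar u_2) = x^{am-1} y^{dm+1}$. Then I would induct on $i$ from $2$ to $r$, at each step applying Lemma \ref{lemma 4.3} to $\{u,\bar u_i\} = \bar a_i\bar u_{i+1,y}$, to establish
\[
lt(\bar u_i) = x^{\alpha_i} y^{\beta_i},\qquad \beta_i = (i-1)dm + 1,\quad \alpha_i = (i-1)(am-1) - \sum_{j=2}^{i-1} e_j,
\]
where $e_j := \deg_x \bar a_j \ge 0$. The non-degeneracy hypothesis $am\beta_i - dm\alpha_i \neq 0$ of Lemma \ref{lemma 4.3} simplifies to $am + (i-1)dm + dm\sum_{j=2}^{i-1} e_j > 0$, which is automatic; and $\alpha_i\ge 0$ is forced since $\bar a_i(x)$ must divide the $y$-leading coefficient of $\{u,\bar u_i\}$ in $k[x]$.

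At $i=r$, equation $(r)$ gives $\bar u_r = H(q)$, so $lt(\bar u_r) = x^{as} y^{ds}$ with $s := \deg H\ge 1$. Matching the two expressions for $lt(\bar u_r)$, the $y$-exponents yield $ds = (r-1)dm + 1$, which forces $d\mid 1$ and hence $d=1$; then the $x$-exponents give $as = (r-1)(am-1) - \sum_{j=2}^{r-1} e_j$, i.e.\ $\sum_{j=2}^{r-1} e_j = -(r-1) - a < 0$, contradicting $e_j\ge 0$. Thus $q_d \in k^*$. The principal technical point is maintaining the inductive leading-term formula through the iterated application of Lemma \ref{lemma 4.3}, but the explicit expressions for $\alpha_i,\beta_i$ reduce every step to direct verification.
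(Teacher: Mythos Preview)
Your argument is correct, and it takes a genuinely different route from the paper's. The paper does not specialize the extra variables; instead it distinguishes two cases. If $\deg_{x_3}u_2\ge 2$ it invokes Lemma~\ref{lemma 4.1} to get ${u_2}_y=Q(q)_y$ and obtains an immediate exponent contradiction. Otherwise it lets $m$ be the largest index with ${u_2}_{x_3},\ldots,{u_m}_{x_{m+1}}\in k^*$, runs the leading-term induction (with all $e_j=0$) only up to $u_{m+1}$, and then again applies Lemma~\ref{lemma 4.1} at $i=m+1$ to force ${u_{m+1}}_y=Q(q)_y$ and reach a contradiction. Your approach replaces this two-case scheme by the preliminary observation $({u_i}_{x_{i+1}})_y=0$ (obtained by differentiating equation~$(i{-}1)$ in $x_{i+1}$) and a generic specialization $x_j\mapsto c_j$, which lets you run the leading-term induction uniformly all the way to $i=r$ while carrying the parameters $e_j=\deg_x\bar a_j$. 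The payoff is that you avoid Lemma~\ref{lemma 4.1} and the case split entirely, at the modest cost of the specialization step and the bookkeeping of the $e_j$; the paper's approach, conversely, keeps the induction shorter by terminating at the first index with $d_i\ge 2$, but needs the auxiliary structural Lemma~\ref{lemma 4.1}. Both yield the same exponent contradiction.
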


\begin{proof}  i) Since $u_y\neq 0$ and $u=p(q)$ we get $q_y\neq 0$, so $d\geq 1$ and $N:=deg_T\,p(T)\geq 1$. We must  show that $s:=deg_x\,q_d(x)=0$. Therefore assume $s\geq 1$. We use the lexicographical order described above and
compute the leading terms of the $u_i$, for all $1\leq i\leq m+1$. First, from $u=p(q)$ it follows that $lt(u)=x^{sN}y^{dN}$. Then, by $(eq\,1)$ we get  $lt(u_2)=x^{sN-1}y^{dN+1}$.

First assume that $deg_{x_3}\, u_2\geq 2$. It then follows from lemma \ref{lemma 4.1} and $(\ref{eq 2})$ that ${u_2}_y=Q(q)_y$ for
some $Q(T)\in k[T]$ with $\rho:=deg_T\,Q(T)\geq 1$. So $lt({u_2}_y)=x^{\rho s}y^{\rho d-1}$. Consequently, $sN-1=\rho s$
and $dN+1=\rho d-1$. Multiplying the first equation by $d$, the second by $s$ and then subtracting these new equations we get  $-dm-s=s$, a contradiction. So we may assume that  $deg_{x_3}\, u_2=1$, i.e.\@ ${u_2}_{x_3}\in k^*$.
So there exists $2\leq m\leq n-1$, maximal such that $\la_2:={u_2}_{x_3}\in k^*,\cdots,
\la_m:={u_m}_{x_{m+1}}\in k^*$.  Observe $m\leq r-1$.  We claim that for all $2\leq i\leq m+1$ we have
$$lt(u_i)=x^{(i-1)sN-(i-1)}y^{(i-1)dN+1}$$
\noindent We use induction on $i$, the case $i=2$ is already done. So assume the case is proved for $i<m+1$. It follows from
$(\ref{eq i})$ that

\begin{equation}
\label{*1}
u_x{u_i}_y-u_y{u_i}_x={\la_i}{u_{i+1}}_y.
\end{equation}

\noindent It then follows from lemma \ref{lemma 4.3} that the leading term of the left hand side is equal to $x^{isN-i}y^{idN}$. Then
(\ref{*1}) gives that $lt(u_{i+1})=x^{isN-i}y^{idN+1}$, which proves the claim.\\
\noindent ii) In particular we have $lt(u_{m+1})=x^{msN-m}y^{mdN+1}$.  On the other hand, by lemma \ref{lemma 4.1}, there exists $Q(T)\in k[T]$ such that ${u_{m+1}}_y=Q(q)_y$. So if $deg_T Q(T)=\rho$, then  we get $lt({u_{m+1}}_y)=x^{\rho r} y^{\rho d -1}$.
Consequently $msN-m=\rho r$ and $mdN+1=\rho d-1$. Multiplying the first equation by $d$, the second by $s$ and then subtracting these new equations we get  $-dm-s=s$, a contradiction. So $s=0$, as desired.
\end{proof}

\section{The main result}

Now we will describe the main result of this paper. Recall that
\begin{equation}
\label{main}
H = (u(x,y),u_2(x,y,x_3), u_3(x,y,x_4), \ldots, u_{n-1}(x,y,x_{n}),u_n(x,y)).
\end{equation}
\noindent By Corollary \ref{cor1} and Corollary \ref{cor2}, in order to describe all $H$ in (\ref{main}) such that $JH$ is nilpotent,
we may assume that ${u_2}_{x_3}\neq 0$ and $u_y\neq 0$. As seen before, it follows from ${u_2}_{x_3}\neq 0$ that there
 exists $3\leq r\leq n$ such that ${u_i}_{x_{i+1}}\neq 0$ for all $2\leq i\leq r-1$ and ${u_r}_{x_{r+1}}=0$. Let $d_i:=deg_{x_{i+1}}\, u_i$,
 for all $2\leq i\leq n-1$. So $d_i\geq 1$ if $2\leq i\leq r-1$.

\begin{definition}
$P(T)\in k[T]$ of degree $d\geq 1$ is called {\em nice} if the coefficient of $T^{d-1}$ equals zero. The (leading) coefficient of $T^d$ will be denoted by $p_d$.
\end{definition}

\begin{theorem}
\label{main1} Let $H$ be as in \textnormal{(\ref{main})} with ${u_2}_{x_3}\neq 0$, $u_y\neq 0$ and $r$ as above. Then
 $JH$ is nilpotent if and only if the following conditions hold
 \begin{itemize}
 \item[(a)]
 $$u(x,y) = p(y + a(x)) \mbox{ and } u_2 = -a'(x)u  + P_2(x_3 + \frac{1}{d_2 p_{d_2}} b_2(x)),$$
for some $p(T)\in k[T]$ with $deg_T\,p(T)\geq 1, \, a(x), b_2(x) \in k[x]$ and $P_2(T)\in k[T]$ nice of degree $d_2$.
If $d_2\geq 2$, then $a''(x)=0$.
\vspace{0.2 cm}


\item[(b)] If $3 \leq i \leq r-1$ and $u_{i-1} = \sum_{j=1}^{l} c_{i-1,j}(x) u^j + P_{i-1}(x_{i}+\frac{1}{d_{i-1} p_{d_{i-1}}} b_{i-1}(x))$, with $c_{i-1,j}(x), b_{i-1}(x)\in k[x]$ and $P_{i-1}(T)$ nice of degree $d_{i-1}$, then
 $$u_i = -\frac{1}{d_{i-1} p_{d_{i-1}}} \big[\sum_{j=1}^l\frac{1}{j+1} {c'_{i-1,j}(x)} u^{j+1}+b_{i-1}'(x) u\big]+P_i(x_{i+1}+\frac{1}{d_i p_{d_i}} b_i(x))$$
for some $b_i(x)\in k[x]$ and $P_i(T)\in k[T]$, nice of degree $d_i$. If  $d_{i-1}\geq 2$, then  $c'_{i-1,j}(x)=0$ for all $j$.

\vspace{0.2 cm}

\item[(c)] If $u_{r-1} = \sum_{j=1}^{l} c_{r-1,j}(x) u^j + P_{r-1}(x_{r}+\frac{1}{d_{r-1} p_{d_{r-1}}} b_{r-1}(x))$, with $c_{r-1,j}(x), b_{r-1}(x)\in k[x]$ and $P_{r-1}(T)$ nice of degree $d_{r-1}$, then
$$u_r(x,y) = -\frac{1}{d_{r-1} {p_d}_{r-1}} \big[\sum_{j=1}^l\frac{1}{j+1} {c'_{r-1,j}(x)} u^{j+1}+b_{r-1}'(x) u\big]+b_r,$$
 with  ${c'_{r-1,j}}\in k$ for all $j\geq 1$ and $b_r\in k$, $b_{r-1}' \in k$. If $d_{r-1}\geq 2$, then $c'_{r-1,j}=0$ for all $j$.

 \vspace{0.2 cm}

\item[(d)]  No extra conditions on $u_i$ if $i>r$.

 \end{itemize}
\end{theorem}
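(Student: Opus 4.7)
The plan is to prove the equivalence by induction on $i$ from $2$ up to $r$, with $i > r$ handled trivially. Throughout I use $u = p(y + a(x))$ from Proposition \ref{u}, which gives $u_x = a'(x) u_y$ and $D_0(u) = 0$. At level $i$, the $(i-1)$-th of the simplified equations displayed just before Corollary \ref{cor2} relates $u_{i-1}$ (whose form I assume by induction) to $(u_i)_y$; then Lemma \ref{lemma 4.1} applied to $u_i$ (with $s = x_{i+1}$ and $w = u_{i+1}$) controls the $x_{i+1}$-dependence of $u_i$. The converse direction is checked by verifying that the stated forms satisfy equations $(1),\ldots,(r)$ and that equations with index $>r$ vanish automatically.

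Base case (a): equation $(1)$ integrates to $u_2 = -a'(x)u + h(x,x_3)$. A short computation gives $D_0(u_2) = -u_y\bigl(a''(x) u - h_x\bigr)$, so equation $(2)$ reads $u_y\bigl(a''(x) u - h_x\bigr) = h_{x_3}(u_3)_y$. If $d_2 \geq 2$, Lemma \ref{lemma 4.1} applied to $v = u_2$ pins down the top two $x_3$-coefficients of $h$ as $p_{d_2} \in k^*$ and $b_2(x) \in k[x]$; completing the depression rewrites $h$ as $P_2(x_3 + b_2(x)/(d_2 p_{d_2}))$ with $P_2$ nice. Substituting back and using the lemma's explicit formula for $(u_3)_y$ reduces the equation to $a''(x) u = 0$, whence $a''(x) = 0$. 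The case $d_2 = 1$ is handled by a direct inspection of $x_3$-degrees in equation $(2)$, yielding $P_2(T) = p_1 T$ with $p_1 \in k^*$.

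Inductive step (b): given $u_{i-1}$ in the claimed form, a direct calculation (in which the $u^{j-1} u_x u_y$ cross-terms cancel) gives $D_0(u_{i-1}) = u_y \bigl(\sum_j c'_{i-1,j}(x) u^j + \frac{b'_{i-1}(x)}{d_{i-1} p_{d_{i-1}}} P'_{i-1}(\eta_{i-1})\bigr)$, with $\eta_{i-1} = x_i + b_{i-1}(x)/(d_{i-1} p_{d_{i-1}})$. Equation $(i-1)$ then expresses $(u_i)_y$ as minus this divided by $P'_{i-1}(\eta_{i-1})$. Since $u_i$ is $x_i$-free but $P'_{i-1}(\eta_{i-1})$ has positive $x_i$-degree whenever $d_{i-1} \geq 2$, one deduces the dichotomy: either $d_{i-1} = 1$ (so $P'_{i-1}$ is the constant $p_{d_{i-1}}$), or $c'_{i-1,j}(x) = 0$ for all $j$. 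Integrating in $y$ via $u^j u_y = \partial_y(u^{j+1}/(j+1))$ yields the claimed formula for $u_i$ modulo an additive $g_i(x, x_{i+1})$; a fresh application of Lemma \ref{lemma 4.1} to $u_i$ then identifies $g_i$ as $P_i(x_{i+1} + b_i(x)/(d_i p_{d_i}))$ nice (or $p_1 x_{i+1} + b_i(x)$ if $d_i = 1$, by a degree argument). Part (c) proceeds identically at level $r-1$, except that $(u_r)_{x_{r+1}} = 0$ forces $g_r$ to depend only on $x$, so $g_r = b_r(x)$; the additional equation $(r)$: $D_0(u_r) = 0$, combined with the $k(x)$-transcendence of $u$, then forces $b_r$, $b'_{r-1}$, and each $c'_{r-1,j}$ to lie in $k$. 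Part (d) is immediate, since every equation of Proposition \ref{nilpotency} with index $>r$ carries $(u_r)_{x_{r+1}} = 0$ as a factor.

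The main obstacle I anticipate is the dichotomy argument in (b): naively, $(u_i)_y$ would have $P'_{i-1}(\eta_{i-1})$ in the denominator, and the $x_i$-independence of $u_i$ has to be combined with the $k(x)$-transcendence of $u$ (equivalently $u_y \neq 0$) to force the clean simplification in each sub-case. A secondary subtlety is the niceness bookkeeping: the free $y$-integration constant at each step must be absorbed into a shift of $x_{i+1}$ by $b_i(x)/(d_i p_{d_i})$ so that the emerging $P_i$ is honestly nice, and one must check that this is consistent going from level $i$ to level $i+1$. The converse direction is routine but lengthy, essentially running each of the above steps in reverse.
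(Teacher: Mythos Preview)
Your approach is essentially the same as the paper's; you have simply inlined the work that the paper isolates as the preparatory Theorem~\ref{teorema 5.2}. Your ``dichotomy'' (using the $x_i$-independence of $u_i$ in equation $(i-1)$ to force either $d_{i-1}=1$ or all $c'_{i-1,j}=0$) is a clean repackaging of the paper's case split $d\geq 2$ versus $d=1$ in that theorem, and your treatment of~(c) via $D_0(u_r)=0$ together with the transcendence of $u$ over $k(x)$ matches Theorem~\ref{teorema 5.2}(i).

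There is one step you should make explicit. In (a) and in the inductive step (b) you write that Lemma~\ref{lemma 4.1} ``identifies $g_i$ as $P_i(x_{i+1}+b_i(x)/(d_ip_{d_i}))$''. When $d_i\geq 2$, Lemma~\ref{lemma 4.1} by itself only gives the leading coefficient $p_{d_i}\in k^*$ and the formula $(u_{i+1})_y=-\tfrac{1}{d_ip_{d_i}}b'_i(x)u_y$; it does \emph{not} tell you that the lower $x_{i+1}$-coefficients of $g_i$ are constants in $k$ rather than polynomials in $x$. To get that, you must substitute the expression for $(u_{i+1})_y$ back into equation $(i)$, separate off the $y$-dependent part (which yields the side conditions $c''_{i-1,j}=0$, $b''_{i-1}=0$ and, in the base case, $a''(x)=0$), and then solve the remaining first-order equation
\[
\Bigl(\partial_x-\tfrac{1}{d_ip_{d_i}}b'_i(x)\,\partial_{x_{i+1}}\Bigr)g_i=0,
\]
whose solutions are exactly polynomials in $x_{i+1}+\tfrac{1}{d_ip_{d_i}}b_i(x)$. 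This is precisely what the paper does in the proof of (a) and inside Theorem~\ref{teorema 5.2}; your ``completing the depression'' in (a) and ``niceness bookkeeping'' in (b) are gesturing at this step but do not carry it out. Your $d_i=1$ degree argument is fine as stated.
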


\medskip





To prove this theorem we need some preliminaries:

\medskip

\begin{theorem}
\label{teorema 5.2} Let $v=\sum_{i=1}^l c_i(x)u^i+P(s+\frac{1}{dp_d}b(x))$, with $P$ nice of degree $d\geq 1$
and $b(x)\in k[x]$. Let $v$ and $w$ satisfy

\begin{equation}
\label{5.2.1}
D_0(v)=-v_s w_y
\end{equation}

\begin{equation}
\label{5.2.2}
D_0(w)=-w_t g_y
\end{equation}
\noindent for some $w\in k[x,y,t]$ with $e:=deg_t\,w\geq 0$ and $g\in k[x,y,r]$.\\
\begin{itemize}
\item[i)] If $e=0$, then
$$w=-\frac{1}{dp_d} \big(\sum_{i=1}^l\frac{1}{i+1} c'_i(x)u^{i+1}+b'(x) u\big)+c(x)$$
\noindent with $b'(x), c(x)\in k$ and $c'_i(x)\in k$ for all $i$.
\item[ii)]  If $e\geq 1$ there exist $c(x)\in k[x]$ and $Q(T)\in k[T]$, nice of degree $e$,
with leading coefficient $q_e$ such that
$$w=-\frac{1}{dp_d} \big(\sum_{i=1}^l\frac{1}{i+1} c'_i(x)u^{i+1}+b'(x) u\big)+Q(t+\frac{1}{e q_e} c(x))$$
\item[iii)] Furthermore, if $d\geq 2$, then $c'_i=0$ for all $i$.
\end{itemize}
\end{theorem}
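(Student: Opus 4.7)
The plan is to use equations \textnormal{(\ref{5.2.1})} and \textnormal{(\ref{5.2.2})} in succession: the first will determine $w_y$ up to an integration constant $C(x,t)\in k[x,t]$, and will also yield part (iii) as a by-product; the second will then pin down $C$ and force the \emph{nice} form of $Q$. Since $u=p(y+a(x))$ by Proposition \ref{u} we have $D_0(u)=0$ and $D_0(f(x))=u_y f'(x)$, so a direct computation gives
$$D_0(v)=u_y\sum_{i=1}^{l} c_i'(x)u^i+\frac{u_y b'(x)}{dp_d}\,v_s,$$
where $v_s=P'(s+b(x)/(dp_d))$ is a polynomial in $s$ of degree $d-1$ with leading coefficient $dp_d$. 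Since $w\in k[x,y,t]$ the derivative $w_y$ is $s$-free, so \textnormal{(\ref{5.2.1})} rearranges to $-v_s\bigl(w_y+\frac{u_y b'(x)}{dp_d}\bigr)=u_y\sum_i c_i'(x)u^i$. If $d\geq 2$, matching $s$-degrees forces the second factor on the left to vanish, giving $w_y=-\frac{u_y b'(x)}{dp_d}$ together with $\sum_i c_i'u^i=0$; since $u$ is transcendental over $k(x)$, the latter yields $c_i'(x)=0$ for all $i$, which is (iii). If $d=1$, $v_s=dp_d$ is a nonzero constant and one reads off $w_y=-\frac{u_y}{dp_d}\bigl[\sum c_i'u^i+b'(x)\bigr]$. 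In either case, using $u^iu_y=(u^{i+1})_y/(i+1)$ and integrating in $y$ yields
$$w=-\frac{1}{dp_d}\Bigl[\sum_{i=1}^l\frac{c_i'(x)}{i+1}u^{i+1}+b'(x)u\Bigr]+C(x,t)$$
for some $C(x,t)\in k[x,t]$.

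Next I would substitute this expression for $w$ into \textnormal{(\ref{5.2.2})}. Using $D_0(u)=0$ and $D_0(C(x,t))=u_y C_x$, together with the standard observation that $g_y$ must be independent of its extra variable $r$ and hence $g_y\in k[x,y]$, equation \textnormal{(\ref{5.2.2})} becomes
$$u_y C_x+g_y C_t=\frac{u_y}{dp_d}\Bigl[\sum\frac{c_i''(x)}{i+1}u^{i+1}+b''(x)u\Bigr].\qquad(\star)$$
When $e=0$, $C(x,t)=c(x)$ and $C_t=0$, so $(\star)$ reads $c'(x)=\frac{1}{dp_d}[\sum\frac{c_i''}{i+1}u^{i+1}+b''(x)u]$; comparing powers of $u$ (linearly independent over $k[x]$ since $u$ is a nonconstant polynomial in $y$) forces $c_i''=0$, $b''=0$ and hence $c'=0$, which is (i). For $e\geq 2$, I would write $C(x,t)=\sum_{j=0}^e C_j(x)t^j$ and compare coefficients of $t^j$ in $(\star)$: the $t^e$-coefficient gives $C_e=:q_e\in k^*$; the $t^{e-1}$-coefficient gives $g_y=-\frac{C_{e-1}'(x)}{eq_e}u_y$; and substituting $g_y$ back into $(\star)$ and dividing by $u_y$ collapses the entire system into the single first-order linear PDE
$$C_x-\frac{C_{e-1}'(x)}{eq_e}\,C_t=\frac{1}{dp_d}\Bigl[\sum\frac{c_i''}{i+1}u^{i+1}+b''(x)u\Bigr].$$
The left-hand side is $y$-free, so the same independence argument gives $c_i''=0$ and $b''=0$, leaving the homogeneous PDE $C_x=\frac{C_{e-1}'(x)}{eq_e}C_t$; its characteristics $t+C_{e-1}(x)/(eq_e)=\text{const}$ produce $C(x,t)=Q\!\bigl(t+c(x)/(eq_e)\bigr)$ with $c(x):=C_{e-1}(x)$ and $Q\in k[T]$ of degree $e$, leading coefficient $q_e$. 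Matching the $t^{e-1}$-coefficients of the two expressions gives $q_{e-1}+c(x)=C_{e-1}(x)=c(x)$, so $q_{e-1}=0$ and $Q$ is nice, which completes (ii). The boundary case $e=1$ is a direct variant: $C_1\in k^*$ from the $t^1$-coefficient, $c(x):=C_0(x)$ may be arbitrary, $g_y$ absorbs the $u$-dependent remainder with no further constraints, and $Q(T):=q_1 T$ exhibits $C(x,t)=Q(t+c(x)/q_1)$.

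The main difficulty will be the coefficient analysis for $e\geq 2$: verifying that the $t^{e-1}$-equation cleanly isolates $g_y$, that the remaining equations really do collapse into the single PDE $C_x=\frac{C_{e-1}'(x)}{eq_e}C_t$ with no hidden constraint lost, and that the characteristic solution simultaneously delivers both the form $Q(t+c(x)/(eq_e))$ and the niceness condition $q_{e-1}=0$. The $e=1$ edge case also requires separate care, since there $t^{e-1}$ and $t^0$ coincide and the $u$-dependent part of $(\star)$ is absorbed into $g_y$, so no constraints on $c_i''$ or $b''$ need emerge.
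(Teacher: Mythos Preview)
Your argument is correct and follows essentially the same route as the paper's proof: use (\ref{5.2.1}) to determine $w_y$ and integrate, then feed the result into (\ref{5.2.2}), extract $g_y$ from the top $t$-coefficients, and reduce to a first-order PDE in $(x,t)$ whose kernel yields the nice polynomial $Q$. The one noteworthy difference is that you compute $D_0(v)$ directly from the given form $v=\sum c_i(x)u^i+P(s+b(x)/(dp_d))$, exploiting $D_0(u)=0$ and $D_0(f(x))=u_y f'(x)$; the paper instead writes $v=v_0(x,y)+\sum v_i(x)s^i$ generically and invokes Lemma~\ref{lemma 4.1} (which requires $d\geq 2$) to obtain $w_y$, forcing an early case split $d\geq 2$ versus $d=1$. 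Your direct computation handles both $d$-cases more uniformly and avoids the appeal to Lemma~\ref{lemma 4.1}, at the cost of repeating its coefficient analysis in the $e\geq 2$ step; conceptually the two arguments are the same.
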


\medskip

\begin{proof} Write $v=v_0(x,y)+\sum_{i=1}^d v_i(x) s^i$ and $w=w_0(x,y)+W$, where $W=0$ if $e=0$ and $W=\sum_{i=1}^e w_i(x,y) t^i$, if $e\geq 1$.
Then $v_d=p_d\in k^*$, $v_y={v_0}_y$, $w_y={w_0}_y$ (by (\ref{5.2.1})) and $w_e\in k^*$ (by (\ref{5.2.2})), if $e\geq 1$.

First assume $d\geq 2$. Then $w_y=-\frac{1}{dv_d} v'_{d-1}(x) u_y$ (by lemma \ref{lemma 4.1}). So $w_0=-\frac{1}{dv_d}v'_{d-1}(x)u+c(x)$
for some $c(x)\in k[x]$.  Put $b(x)=v_{d-1}(x)$. So, if $e=0$, then  $w=-\frac{1}{dp_d} b'(x) u+c(x)$ and if $e=1$
then $w=-\frac{1}{dp_d} b'(x) u+c(x)+q_1t$, where $q_1:=w_1$. Substituting these formulas in (\ref{5.2.1}) we get
$u_y\sum_{i=1}^l c'_i(x)u^i=0$, which implies that all $c'_i=0$, since $u$ contains $y$. If $e=0$ , then $w_t=0$, so \ref{5.2.2} implies that $b'(x), c(x)\in k$. This proves the case $d\geq 2$, $e\leq 1$.

Now let $e\geq 2$. Then by lemma \ref{lemma 4.1}, applied to (\ref{5.2.2}), we get $g_y=-\frac{1}{ew_e} w'_{e-1}(x) u_y$.
Substituting this formula into (\ref{5.2.2}) we get
$$u_x(-\frac{1}{dv_d}v'_{d-1}(x) u_y)-u_y({w_0}_x+\p_x(W))=-\frac{1}{ew_e}w'_{e-1}(x) u_y\p_t(W)$$
\noindent Also, using the formula for $w_0$ obtained above, we have
$${w_0}_x=-\frac{1}{dv_d}v'_{d-1}(x) u_x+-\frac{1}{dv_d}v''_{d-1}(x) u+c'(x)$$
\noindent So, combining the last two formulas, we get
$$-u_y[-\frac{1}{dv_d}v''_{d-1}(x) u+c'(x)+\p_x(W)]=u_y[-\frac{1}{ew_e}w'_{e-1}(x)\p_t(W)]$$
\noindent Hence
$$\frac{1}{dv_d}v''_{d-1}(x)u-c'(x)=(\p_x-\frac{1}{ew_e}w'_{e-1}(x)\p_t)W\in k[x,t]$$
\noindent Since $u_y\neq 0$ we get $v''_{d-1}(x)=0$. So $(\p_x-\frac{1}{ew_e}w'_{e-1}(x)\p_t)(c(x)+W)=0$,
whence $W=-c(x)+Q(t+\frac{1}{ew_e}w_{e-1}(x))$, for some $Q(T)\in k[T]$. Since $w=w_0+W$ and $w_0=-\frac{1}{dv_d}v'_{d-1}(x) u +c(x)$
we get the desired formula for $w$, using that $v_{d-1}=b(x)$ and $v_d=p_d$ and observing that $Q(T)$ is nice of degree $e$.
The statement in iii) follows again from  (\ref{5.2.1}), using that $w_y=-\frac{1}{dv_d} v'_{d-1}(x) u_y$.

Now, assume $d=1$. So $v=\sum_{i=1}^l c_i(x) u^i+p_1 s+b(x)$. Using (\ref{5.2.1}) we get
$$-u_y(\sum_{i=1}^l c'_i(x) u^i+b'(x))=p_1w_y=p_1{w_0}_y$$
\noindent So

\begin{equation}
\label{5.2.3}
w_0=-\frac{1}{p_1}\big(\sum_{i=1}^l \frac{1}{i+1}c'_i(x)u^{i+1}+b'(x) u\big)+c(x),
\end{equation}
\noindent for some $c(x)\in k[x]$. So, if $e=0$, \ref{5.2.2} implies again that $b'(x),c(x)\in k$ and all $c'_i(x)\in k$. So this  case is done. Also the case $e=1$ is done, using that $w=w_0+q_1t$. So assume
that $e\geq 2$. Then, as observed above $g_y=-\frac{1}{ew_e} w'_{e-1}(x) u_y$. By (\ref{5.2.2}) and (\ref{5.2.3}) we get
$$(-\frac{1}{p_1})\big[\sum_{i=1}^l c''_i(x) u^{i+1}+b''(x)u\big]+c'(x)=-(\p_x-\frac{1}{ew_e}w'_{e-1}(x)\p_t)(W)\in k[x,t]$$
\noindent  Since $u$ contains $y$ we get that $b''(x)=0$ and all $c''_i(x)=0$. So
$$(\p_x-\frac{1}{ew_e}w'_{e-1}(x)\p_t)(W+c(x))=0$$
\noindent Hence $W=-c(x)+Q(t+\frac{1}{ew_e}w_{e-1}(x))$, for some $Q(T)\in k[T]$, which is nice of degree $e$. Then the formula
for $w$ follows from $w=w_0+W$ and (\ref{5.2.3}).

\end{proof}

Now we prove the main result of this paper

\medskip

\noindent{\bf{Proof of Theorem \ref{main1}}:} 
As seen above the proof of Corollary \ref{cor2}, the nilpotency of $JH$ is equivalent to the following equations
$$u_x+{u_2}_y=0\,\,\,\,\,\,\,\,\,\,\,\,\,\,\,\,\,\, \,\,\,\,\,\,\,\,\,\,\,\,\,\,\,\,\,\,\,\,\,\,\,\,\,\,\,\,\,\,\,\,\,\,\,\,\,(1),$$
$$u_x{u_2}_y-u_y{u_2}_x = {u_2}_{x_3}{u_3}_y\,\,\,\,\,\,\,\,\,\,\,\,\,\,\,\,\,\,\,\,\,\,(2),$$
$$u_x{u_3}_y-u_y{u_3}_x = {u_3}_{x_4}{u_4}_y\,\,\,\,\,\,\,\,\,\,\,\,\,\,\,\,\,\,\,\,\,\,\,(3),$$
$$\vdots$$
$$u_x {u_{r-1}}_{y} - u_y {u_{r-1}}_{x}={u_{r-1}}_{x_{r}}{u_{r}}_{y},\mbox { (r-1)},$$
$$u_x{u_r}_y-u_y{u_r}_x = 0  \,\,\,\,\,\,\,\,\,\,\,\,\,\,\,\,\,\,\,\,\,\,\,\,\,\,\,\,\,\,\,\,\,\,\,\,\,\,\,\,\mbox{ (r)}.$$

\noindent First assume that $JH$ is nilpotent. So to prove the theorem we need to solve the $r$ equations above.  Let $2\leq j\leq r-1$ and write
$$u_j = u_{j,0}(x,y) + \displaystyle \sum_{i=1}^{d_j} u_{j,i}(x,y)x_{j+1}^i$$





\noindent  As ${u_j}_{x_{j+1}} \neq 0,$  we obtain $d_j \geq 1$ and
if $i \geq 1$  it follows from  from (j) and $u_y\neq 0$ that $u_{j,i} = u_{j,i}(x)$. So  ${u_j}_y = {u_{j,0}}_y$. Moreover
we obtain from equation (j) that $u_{j,d_j}\in k^*$.

\begin{itemize}
\item[(a)] By Proposition \ref{u} we have that $u=p(y + a(x))$ for
some $p(T)\in k[T]$ with $deg_T\,p(T)\geq 1$ and $a(x)\in k[x]$.
From (1) we get $u_{2,0}=-a'(x)u+c(x)$, with $c(x)\in k[x]$. So if $d_2=1$, then $u_2$ has the desired form. If
$d_2\geq 2$, then $u_2=-a'(x)+c(x)+U_2$, where $U_2=\sum_{i=1}^{d_2} u_{2,i}(x)x_3^i$. It follows from (2) and
lemma \ref{lemma 4.1} that ${u_{3}}_y=-\frac{1}{d_2 p_{d_2}}b'_2(x){u_2}_y$, for some $b_2(x)\in k[x]$.
Substituting these formulas in (2), an easy calculation gives
$$a''(x)u-c'(x)=(\p_x-\frac{1}{d_2 p_{d_2}}b'_2(x)\p_{x_3})U_2\in k[x,x_3]$$
\noindent Since $u$ contains $y$ we get $a''(x)=0$ and hence
$$(\p_x-\frac{1}{d_2 p_{d_2}}b'_2(x)\p_{x_3})(U_2+c(x))=0$$
\noindent So $U_2=-c(x)+P_2(x_3+\frac{1}{d_2 p_{d_2}}b_2(x))$, for some $P_2(T)\in k[T]$, nice
of degree $d_2$. Since $u_2=-a'(x)+c(x)+U_2$ it follows that $u_2$ has the desired form.

\vspace{0.2 cm}

\item[(b)] This case follows directly from Theorem \ref{teorema 5.2} ii) and iii)

\vspace{0.2 cm}

\item[(c)] $u_r$ is obtained by using Theorem \ref{teorema 5.2} i).

\vspace{0.2 cm}

\item[(d)] This follows immediately from the equations $(1),\cdots,(r)$, which do not contain $u_i$ with $i>r$.
\end{itemize}

Conversely, it is left the reader to verify that the formulas obtained in $(a)\cdots (d)$ indeed satisfy the
equations $(1)\cdots (r)$, which shows that the corresponding $H$ has a nilpotent Jacobian matrix.

\section{Invertibility}

Throughout this section
$$H=(u(x,y),u_2(x,y,x_3),u_3(x,y,x_4),\cdots,u_{n-1}(x,y,x_n),u_n(x,y))$$
In the previous sections we completely described all such maps $H$ with the property  that $JH$ is nilpotent. For the the corresponding maps $F=X+H$ we have that
$det\,JF=1$. So if the Jacobian Conjecture is true, $F$ must be invertible. The main result of this section (Theorem \ref{th3} below)
confirms this. More precisely we show that $F$ is a product of elementary maps (see definition below), i.e.

\begin{theorem}\label{th3}
If $H$ is as above and $JH$ is nilpotent, then $F\in E(k,n)$.
\end{theorem}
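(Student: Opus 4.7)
The plan is to prove $F\in E(k,n)$ by exhibiting an explicit factorization into elementary maps, exploiting the rigid normal form of Theorem \ref{main1}. The strategy proceeds in two stages.

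\textbf{Stage 1 (straightening conjugation).} Define an elementary automorphism $\Phi\in E(k,n)$ by composing the single-coordinate shifts $y\mapsto y+a(x)$ and $x_{i+1}\mapsto x_{i+1}+\frac{1}{d_ip_{d_i}}b_i(x)$ for $2\leq i\leq r-1$. Each such shift depends only on $x$ and modifies a distinct coordinate, so every shift is elementary, they commute pairwise, and hence $\Phi\in E(k,n)$. Since $E(k,n)$ is a group, it suffices to prove $\tilde F:=\Phi\circ F\circ\Phi^{-1}\in E(k,n)$. A direct substitution of the normal forms of Theorem \ref{main1} shows that every shift has been absorbed: $\tilde F_1=x+p(y)$, and for $i\geq 2$ the argument of $P_i$ simplifies to $x_{i+1}$ (no $b_i(x)$ term) and $u=p(y+a(x))$ has been replaced by $p(y)$ throughout.

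\textbf{Stage 2 (chain decomposition of $\tilde F$).} In the cleanest subcase, namely $a$ linear and every $d_i\geq 2$, the extra correction terms in Theorem \ref{main1}(b) cancel: the hypothesis $d_i\geq 2$ forces $c'_{i-1,j}(x)=0$, while a secondary application of (b) at the next level forces $b_{i-1}(x)$ to be linear; given linearity of $a$ and all $b_{i-1}$, the Taylor remainders $a(x+p(y))-a(x)-a'(x)p(y)$ and $b_j(x+p(y))-b_j(x)-b'_j(x)p(y)$ all vanish, and the coefficient $-\frac{b'_{i-1}(x)}{d_{i-1}p_{d_{i-1}}}$ of $p(y)$ arising in $\tilde F_i$ cancels against the matching contribution coming from $b_{i-1}$'s linearity in $c_{i-1}b_{i-1}(x+p(y))-c_{i-1}b_{i-1}(x)$. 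Thus $\tilde F$ reduces to the chain $\tilde F_1=x+p(y)$, $\tilde F_i=x_i+P_i(x_{i+1})$ for $2\leq i\leq n-1$, and $\tilde F_n=x_n+(\text{constant})$. This chain factors immediately as $E_n\circ E_{n-1}\circ\cdots\circ E_1$ with $E_1$ applied first, because each $E_i$ modifies $x_i$ using only the still-unmodified $x_{i+1}$.

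\textbf{Main obstacle.} When some $d_i=1$, Theorem~\ref{main1} allows non-vanishing $c'_{i-1,j}(x)$ as well as nonlinear $a$ (when $i=2$) and nonlinear $b_{i-1}$, which leave polynomial-in-$u$ corrections and Taylor remainders that survive the straightening. Each such residual must be absorbed by inserting one additional elementary map. A guiding dimension-three toy case illustrates the mechanism: take $a(x)=x^2$, $p(T)=T$, $b_2=0$, so that $F=(x+y+x^2,\, y-2xy-2x^3+x_3,\, x_3+(y+x^2)^2)$; conjugating by $\sigma:y\mapsto y+x^2$ gives $\tilde F=(x+y,\, y+y^2+x_3,\, x_3+y^2)$, which factors as $(y\mapsto y+x_3)\circ(x_3\mapsto x_3+y^2)\circ(x\mapsto x+y)$ — a product of three elementary maps, the first of which is the extra correction absorbing the quadratic residual left over from the nonlinearity of $a$. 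The general argument then proceeds by induction on the depth $r$, matching at each layer the appropriate correction to the $d_i=1$ versus $d_i\geq 2$ dichotomy. This uniform bookkeeping across every branch of Theorem~\ref{main1} is presumably what the authors have in mind when they call the proof ``delicate.''
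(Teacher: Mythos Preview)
Your proposal is not a proof but an outline with one completed special case and one verified toy example; the general case is only gestured at. You yourself flag this at the end: the ``uniform bookkeeping across every branch'' is exactly what is missing, and it is the entire content of the argument. The conjugation by $\Phi$ and the chain factorization in the case $d_i\ge 2$ for all $i$ are both correct, and the three--dimensional example is verified correctly. But once some $d_i=1$ the residuals you describe, namely the Taylor remainders $b_{i-1}(x+p(y))-b_{i-1}(x)-b'_{i-1}(x)p(y)$ together with the surviving $\sum_j\frac{c'_{i-1,j}(x)}{j+1}p(y)^{j+1}$ terms, propagate recursively (each $c_{i-1,j}$ is built from the derivatives of earlier $b$'s), and you give no mechanism for absorbing them beyond the single $3$--dimensional instance. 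An ``induction on depth $r$'' does not obviously close, because the residual in coordinate $i$ depends on all the earlier layers, not just on layer $i-1$.

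The paper's route is organizationally quite different. It does \emph{not} conjugate. Instead it lets $s$ be the largest index with $d_{s-1}\ge 2$ (so $d_s=\cdots=d_{r-1}=1$) and first gives a closed formula (Proposition~\ref{prop6.1}) for every $u_{s-1+t}$ in the degree-one tail as an explicit polynomial in $u$ with coefficients that are iterated derivatives $b_{s-1+t-k}^{(k)}(x)$. Then, by right-composing $F$ with suitable elementary maps that subtract these bracketed polynomials (Corollary~\ref{cor6.2}), the tail coordinates are successively reduced; the key identity is that the accumulated polynomial in $u$ at level $s$ is recognized, via Taylor's theorem and the vanishing $b_{s-1}^{(r-s+2)}=0$, as $b_{s-1}(F_1)-b_{s-1}(x)$, which can then be killed by an elementary map applied on the \emph{left} (Corollary~\ref{cor6.3}). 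This collapses the whole $d_i=1$ tail to $(F_1,\ldots,F_{s-1},x_s+L_{s-1}^{-1}b_{s-1}(x),x_{s+1},\ldots,x_r)$. Finally the induction is on $n(H):=\#\{i:d_i\ge2\}$, not on $r$: one peels off the block at $s-1$, which drops $n(H)$ by one. Your conjugation idea could in principle be pushed through, but it would require the same explicit Taylor bookkeeping as in Proposition~\ref{prop6.1} and Corollary~\ref{cor6.2}; without that, the argument is incomplete. You also omit the easy boundary cases $u_y=0$ and ${u_2}_{x_3}=0$ (handled in the paper by Propositions~\ref{prop1} and~\ref{prop2}) and the reduction from $n$ to $r$ coordinates.
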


Before we prove this result we make some preliminary remarks. Recall that a polynomial map is called {\em elementary}
if it is of the form $(x_1,\cdots,x_{i-1},x_i+a,x_{i+1},\cdots,x_n)$ for some $a\in k[x]$ not containing $x_i$. We denote such a map for short as $(x_i+a)$. The subgroup
of $Aut_k k[x_1,\cdots,x_n]$ generated by these elementary maps is denoted by $E(k,n)$. Two polynomial maps $F$ and $G$
are called {\em elementary equivalent} if there exist $E_1,E_2\in E(k,n)$ such that $G=E_1\circ F\circ E_2$. Since the $E_i$ are invertible we have that $F$ is invertible if and only if $G$ is invertible. So to prove Theorem \ref{th3} it suffices to
show that $F$ is elementary equivalent to the identity map.

First we consider the case ${u_2}_{x_3}=0$, described in Corollary \ref{cor1}.

\begin{proposition}\label{prop1} Notations as in Corollary \ref{cor1}. Then $F\in E(k,n)$.
\end{proposition}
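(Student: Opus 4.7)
The plan is to show $F\in E(k,n)$ by expressing it explicitly as an ordered composition of elementary maps, using one elementary shear conjugation to normalize when $\lambda_2\neq 0$. From Corollary~\ref{cor1} we have $u=\lambda_2 f(\lambda_1 x+\lambda_2 y)+c_1$ and $u_2=-\lambda_1 f(\lambda_1 x+\lambda_2 y)+c_2$; moreover since ${u_2}_{x_3}=0$, the third and subsequent equations of Proposition~\ref{nilpotency} are automatic, so $u_3,\ldots,u_{n-1},u_n$ are unconstrained polynomials in their stated variables. For each $i\geq 3$ the component $H_i$ avoids the variable $x_i$, so $E_i:=(x_i+H_i)$ is elementary; however if $\lambda_2\neq 0$ then $H_1$ involves $x$ and $H_2$ involves $y$, so $(x+H_1)$ and $(y+H_2)$ are not themselves elementary without first reducing via conjugation.

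The case $\lambda_2=0$ is the base case: here $u=c_1\in k$ and $u_2\in k[x]$, so every $E_i$ is elementary, with dependency pattern $H_1$ constant, $H_2$ in $x_1$, $H_i$ in $x_1,x_2,x_{i+1}$ for $3\leq i\leq n-1$, and $H_n$ in $x_1,x_2$. Placing the $E_i$ in an order such that each $E_i$ lies to the right of every $E_j$ whose variable $x_j$ appears in $H_i$ gives the acyclic order
$$F\;=\;E_1\circ E_2\circ E_n\circ E_{n-1}\circ\cdots\circ E_3,$$
and one verifies by direct evaluation that under this order each $E_i$ is applied while all variables on which its shift depends still have their original values, reproducing $F$ coordinatewise.

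For $\lambda_2\neq 0$, I would introduce the elementary shear $S:=(y-(\lambda_1/\lambda_2)x)\in E(k,n)$, with inverse $(y+(\lambda_1/\lambda_2)x)$. Since $S$ sends $\lambda_1 x+\lambda_2 y$ to $\lambda_2 y$, a direct calculation of $\tilde F:=S^{-1}\circ F\circ S$ yields
$$\tilde F_1=x+\lambda_2 f(\lambda_2 y)+c_1,\qquad \tilde F_2=y+c_2+(\lambda_1/\lambda_2)c_1,$$
with $\tilde F_i=x_i+u_i(x,y-(\lambda_1/\lambda_2)x,x_{i+1})$ for $3\leq i\leq n-1$ and $\tilde F_n=x_n+u_n(x,y-(\lambda_1/\lambda_2)x)$; the key cancellation is that in $\tilde F_2$ the two $\pm\lambda_1 f(\lambda_2 y)$ contributions annihilate. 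Now $\tilde F_1$ depends on $x_2$ alone, $\tilde F_2$ is a constant shift, and the higher components have the same dependency structure as in the base case, so the same ordered-product argument places $\tilde F\in E(k,n)$. Since $S\in E(k,n)$, we conclude $F=S\circ\tilde F\circ S^{-1}\in E(k,n)$.

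The main obstacle is really just bookkeeping: one must verify that the chosen composition order actually reproduces $F$, and recognize in the case $\lambda_2\neq 0$ that conjugation by the elementary shear $S$ is necessary before any elementary decomposition is possible. The cancellation in $\tilde F_2$ collapsing to a constant shift is the essential algebraic consequence of the specific form of $u,u_2$ forced by Corollary~\ref{cor1}.
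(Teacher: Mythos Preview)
Your argument is correct and follows essentially the same strategy as the paper: reduce the $(u,u_2)$-pair to a triangular form via a linear change in the $(x,y)$-plane, and then peel off the remaining components $F_n,F_{n-1},\ldots,F_3,F_2,F_1$ as elementary maps in an order respecting the dependency graph. Your choice to conjugate by the elementary shear $(y-(\lambda_1/\lambda_2)x)$ when $\lambda_2\neq 0$ is in fact slightly cleaner than the paper's, which conjugates by the linear map $S=(\lambda_1 x+\lambda_2 y,\,y,\,x_3,\ldots,x_n)$---a map with $\det JS=\lambda_1$, hence not obviously in $E(k,n)$ unless one first normalizes to $\lambda_1=1$.
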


\begin{proof} First let $n>3$. By the description given in Corollary \ref{cor1} we get
$$(F_1,F_2)=(x+\la_2f(\la_1x+\la_2 y)+c_1,y-\la_1f(\la_1x+\la_2y)+c_2$$
$$F_i=x_i+u_i(x,y,x_{i+1})\mbox{ for all } 3\leq i\leq n-1 \mbox{ and } F_n=x_n+u_n(x,y)$$
Let $T$ be the translation $(x-c_1,x-c_2,x_3,\cdots,x_n)$. Replacing $F$ by $T\circ F$ we may assume that $c_1=c_2=0$.
Furthermore we may assume that $\la_1=\la_2=0$: if for example $\la_1\neq 0$ let $S$ be the invertible linear map
$$(\la_1x+\la_2y,y,x_3,\cdots,x_n)$$
Then $S\circ F\circ S^{-1}=(x,y,F'_2,\cdots,F'_n)$, with $F'_i=x_i+\tilde{u}_i(x,y,x_{i+1})$ for all $3\leq i<n$
and $F'_n=x_n+\tilde{u}_n(x,y)$. So we may assume
$$F=(x,y,x_3+u_3(x,y,x_4),\cdots,x_{n-1}+u_{n-1}(x,y,x_n),x_n+u_n(x,y)$$
Finally, let  $E_n=(x,y,\cdots,x_{n-1},x_n-u_n(x,y))$. Then
$$E_n\circ F=(x,y,x_3+u_3(x,y,x_4),\cdots,x_{n-1}+u_{n-1}(x,y,x_n),x_n)$$
Now one readily verifies that this map belongs to $E(k,n)$, which implies the proposition in case $n>3$.
The case $n=3$ is left to the reader.
\end{proof}

Next we consider the case ${u_2}_{x_3}\neq 0$ and $u_y=0$, described in Corollary \ref{cor2}.

\begin{proposition}\label{prop2} Notations as in Corollary \ref{cor2}. Then $F\in E(k,n)$.
\end{proposition}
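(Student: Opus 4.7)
The plan is to adapt the strategy of Proposition~\ref{prop1} and show that $F$ becomes the identity after left--composition with a product of elementary maps. The key observation is that Corollary~\ref{cor2} forces a strong triangular structure on $H$: $u\in k$; combining ${u_i}_y=0$ with the hypothesis ${u_i}_{x_{i+1}}\ne 0$ gives $u_i=u_i(x,x_{i+1})$ for $2\le i\le r-1$; the equality ${u_r}_{x_{r+1}}=0$ together with ${u_r}_y=0$ forces $u_r=u_r(x)$; while for $r<i\le n-1$ the components $u_i=u_i(x,y,x_{i+1})$ and the top component $u_n=h(x,y)$ are unrestricted.

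First I would left--compose $F$ with the translation $(x_1-u)$, which replaces the first slot by $x$. Then I would run a descending cascade on the bottom $r$ slots. Apply $(x_r-u_r(x_1))$ on the left to turn $F_r$ into $x_r$ (using that $F_1=x$); then apply $(x_{r-1}-u_{r-1}(x_1,x_r))$ to turn $F_{r-1}$ into $x_{r-1}$ (using both $F_1=x$ and $F_r=x_r$); iterate with $(x_i-u_i(x_1,x_{i+1}))$ for $i=r-2,\ldots,3$, and finish with $(x_2-u_2(x_1,x_3))$. At every step the components of $F$ that the elementary map refers to have already been normalised to the corresponding $x_j$, so $u_i(F_1,F_{i+1})$ equals $u_i(x,x_{i+1})$ and cancels the $u_i$--perturbation in $F_i$.

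If $r=n$ the argument is complete. Otherwise I would run a second descending cascade on slots $n,n-1,\ldots,r+1$: apply $(x_n-h(x_1,x_2))$ (using $F_1=x$, $F_2=y$ from the first cascade), then $(x_{n-1}-u_{n-1}(x_1,x_2,x_n))$ (using the freshly obtained $F_n=x_n$), and continue down to $(x_{r+1}-u_{r+1}(x_1,x_2,x_{r+2}))$. After both cascades $F$ has been transformed into the identity by left--multiplication with elements of $E(k,n)$, and hence $F\in E(k,n)$. The only delicate point is the order of the cascades, so that the variables referenced by each elementary map already coincide with the current components of $F$; but this is automatic from the triangular shape of $H$ and from the fact that the index $r$ separates the $y$--independent block from the $y$--dependent block, so I do not expect any real obstacle beyond this bookkeeping.
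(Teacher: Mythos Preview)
Your proposal is correct and follows essentially the same argument as the paper: after translating away the constant $u$ in the first coordinate, you run a descending cascade of left elementary compositions on the block $2\le i\le r$ (using that $u_i$ depends only on $x$ and $x_{i+1}$), and then a second descending cascade on the block $r+1\le i\le n$ (using that $F_1,F_2$ are already $x,y$). The paper's proof is identical in structure and in the order of the reductions, so there is nothing to add.
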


\begin{proof} By the description of Corollary \ref{cor2} we get
$$(F_1,\cdots,F_r)=(x+u,y+u_2(x,x_3),\cdots,x_{r-1}+u_{r-1}(x,x_r),x_r+u_r(x))$$
$F_n=x_n+u_n(x,y)$ and if there exists $r<i<n$, then $F_i=x_i+u_i(x,y,x_{i+1})$. Replacing $F$ by $(x-u)\circ F$ we may
assume that $F_1=x$. Then, replacing $F$ by $(x_r-u_r(x))\circ F$, we may assume that $u_r=0$. Next, replacing
$F$ by $(x_{r-1}-u_{r-1}(x,x_r))\circ F$, we may assume that $u_{r-1}=0$. Continuing in this way we arrive at
$(F_1,\cdots,F_r)=(x,y,x_3,\cdots,x_r)$. So if $r=n$ we are done. Now let $r<n$. Then consider $(x_n-u_n(x,y))\circ F$.
So we may assume that $u_n=0$. Next consider $(x_{n-1}-u_{n-1}(x,y,x_n))\circ F$ etcetera. Finally we arrive
at the identity map, which proves the proposition.
\end{proof}

So from Proposition \ref{prop1} and Proposition \ref{prop2} it follows, that in order to prove Theorem \ref{th3}, we
may assume from now on that $u_y\neq 0$ and ${u_2}_{x_3}\neq 0$ and that we have an $r$ as above.
First we claim $F$ is invertible if and only if $(F_1,\cdots,F_r)$ is invertible:  if $r=n$ there is nothing to prove, so assume $r<n$.
Using that $F_1,\cdots,F_r\in k[x_1,\cdots,x_r]$, $F_n=x_n+u_n(x,y)$
and $F_i=x_i+u_i(x,y,x_{i+1})$ for all $i>r$, it is an easy exercise to show that $F$ is elementary equivalent to the map $$(F_1,\cdots,F_r,x_{r+1},\cdots,x_n)$$
\noindent Furthermore, since the polynomials  $F_1,\cdots,F_r\in k[x_1,\cdots,x_r]$ it is well-known that $(F_1,\cdots,F_r,x_{r+1},\cdots,x_n)$
is invertible if and only if $(F_1,\cdots,F_r)$ is. This implies our claim. So it suffices to show that $(F_1,\cdots,F_r)\in E(k,r)$.


Using the notations of Theorem \ref{main1} we introduce some new notations. First, if $2\leq i<r$ let $l_i$
denote the coefficient of $T^{d_i}$ in $P_i(T)$ and $L_i:=d_il_i$. Furthermore, put $d_1:=2$, $L_1:=1$, $l_r=0$
and $b_1(x):=a(x)$. Let $s\geq 2$ be maximal such that $d_{s-1}\geq 2$. So $2\leq s\leq r$ and $d_i=1$ if $s\leq i<r$.
Hence $L_i=l_i$ if $s\leq i<r$. Finally define
$$\ga_{k,t}:=L_{s-1+(t-1)}^{-1}\cdots L_{s-1+(t-k)}^{-1}, \mbox{ for all  } 1\leq k\leq t\leq r-s+1$$
\noindent One readily verifies that
$$\ga_{1,t}=L_{s-1+t-1}^{-1}\mbox{ and }\ga_{k,t-1}=L_{s-1+t-1}\ga_{k+1,t}, \mbox{ if } 1\leq k\leq t-1\,\,\,\,(*)$$
\noindent Then the next result follows by induction on $t$, using Theorem \ref{main1} and (*).

\begin{proposition}\label{prop6.1}
If $1\leq t\leq r-s+1$, then
$$u_{s-1+t}=\sum_{k=1}^t (-1)^k\frac{1}{k!}\ga_{k,t} b_{s-1+t-k}^{(k)}(x)u^k+l_{s-1+t}x_{s+t}+b_{s-1+t}(x)$$
\noindent with $b_{s-1}^{(r-s+2)}=\cdots=b_{r-1}^{(2)}=b_r^{(1)}=0$.
\end{proposition}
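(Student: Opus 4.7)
The plan is to induct on $t$. For the base case $t=1$, I would apply Theorem \ref{main1}(b) with $i=s$; since $s$ is chosen maximal with $d_{s-1}\geq 2$, the theorem forces every $c'_{s-1,j}(x)$ to vanish, so the general formula collapses to $u_s=-L_{s-1}^{-1}b'_{s-1}(x)u+P_s(x_{s+1}+L_s^{-1}b_s(x))$. Because $s\leq i<r$ implies $d_s=1$, the nice polynomial $P_s$ is simply $l_s T$, and the base formula matches the claim with $\gamma_{1,1}=L_{s-1}^{-1}$.

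For the inductive step I assume the stated expression for $u_{s-1+t}$ and derive the one for $u_{s+t}$. I apply part (b) of Theorem \ref{main1} (or part (c) when $s+t=r$), noting that now $d_{s-1+t}=1$, so the $c'$-vanishing condition does not apply and the coefficients can be differentiated freely. From the inductive hypothesis I read off $c_{s-1+t,k}(x)=(-1)^k\frac{1}{k!}\gamma_{k,t}b^{(k)}_{s-1+t-k}(x)$; differentiating these, dividing by $L_{s-1+t}=d_{s-1+t}p_{d_{s-1+t}}$, and observing that $P_{s+t}$ is again nice of degree $1$ (with the convention $l_r=0$ absorbing the case $s+t=r$), I obtain
\begin{equation*}
u_{s+t}=-L_{s-1+t}^{-1}\Bigl[\sum_{k=1}^t\frac{(-1)^k}{(k+1)!}\gamma_{k,t}b^{(k+1)}_{s-1+t-k}(x)u^{k+1}+b'_{s-1+t}(x)u\Bigr]+l_{s+t}x_{s+t+1}+b_{s+t}(x).
\end{equation*}

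Reindexing the sum by $k'=k+1$, I would then compare the coefficient of $u^{k'}$ with $(-1)^{k'}\frac{1}{k'!}\gamma_{k',t+1}b^{(k')}_{s+t-k'}(x)$; the reconciliation boils down to the single identity $L_{s-1+t}^{-1}\gamma_{k'-1,t}=\gamma_{k',t+1}$, which is precisely the recurrence $(*)$ at index $t+1$ with $k=k'-1$. The $k'=1$ term and the affine $x_{s+t+1}$-contribution match trivially because $\gamma_{1,t+1}=L_{s-1+t}^{-1}$, completing the inductive step.

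The boundary vanishings $b_{s-1}^{(r-s+2)}=\cdots=b_{r-1}^{(2)}=b_r^{(1)}=0$ would be read off by specializing to $t=r-s+1$ and $t=r-s$: Theorem \ref{main1}(c) demands $b_r\in k$ and $b'_{r-1}\in k$ and forces each $c'_{r-1,k}\in k$; translating the latter via the inductive expressions for the $c_{r-1,k}$ yields $b^{(k+2)}_{r-1-k}=0$, i.e.\ $b_j^{(r+1-j)}=0$ for $s-1\leq j\leq r-2$. The hard part is really the bookkeeping: keeping track of the simultaneous shift of the subscript of $b$, of the order of its derivative, and of the subscript on $\gamma$, and recognizing $(*)$ as the one identity that reconciles all three shifts in a single stroke.
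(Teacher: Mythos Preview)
Your argument is correct and is exactly the induction on $t$ via Theorem \ref{main1} and the recursion $(*)$ that the paper indicates (the paper gives only a one-line proof). The only cosmetic point is that the base case $i=s$ may fall under part (a) rather than (b) when $s=2$, but the resulting formula is identical, so nothing changes.
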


\begin{corollary}\label{cor6.2}
Let $F=(x+u,x_2+u_2,\cdots,x_r+u_r)$. Then for every $1\leq t\leq r-s$ there exists $E_t\in E(k,r)$ such that
$F\circ E_t=(F_1,\cdots,F_{r-t-1},\widetilde{F}_{r-t},\widetilde{F}_{r-t+1},\cdots,\widetilde{F}_r)$
\noindent where $\widetilde{F}_{r-i}=x_{r-i}+b_{r-i}(F_1)+l_{r-i}x_{r-i+1}$, for all $0\leq i<t$ and
$$\widetilde{F}_{r-t}=\sum_{k=1}^{r-s-t+1}(-1)^k\ga_{k,r-s-t+1}\big[\frac{1}{k!}b_{r-t-k}^{(k)}u^k+\frac{1}{(k+1)!}b_{r-t-k}^{(k+1)}u^{(k+1)}+\cdots+\frac{1}{(k+t)!}b_{r-t-k}^{(k+t)}u^{k+t}\big]$$
$$+b_{r-t}(F_1)+l_{r-t}x_{r-t+1}+x_{r-t}$$
\end{corollary}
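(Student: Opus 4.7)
The plan is to induct on $t$, at each stage right-composing $E_{t-1}$ with one elementary map which cleans the currently complicated slot and pushes the remaining complexity one position down. At step $t$ only slots $r-t,\ldots,r$ are touched; slots $j\le r-t-1$ remain equal to $F_j$ for free, since $F_j$ depends only on $x,y,x_j,x_{j+1}$ with $j+1\le r-t$.

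For the base case $t=1$, set $E_1:=(x_r+\alpha_r)$ with $\alpha_r:=b_r(F_1)-u_r\in k[x,y]$; this is a valid elementary map since $\alpha_r$ omits $x_r$. Slot $r$ then becomes $x_r+\alpha_r+u_r=x_r+b_r(F_1)=\widetilde F_r$ (note $l_r=0$). Slot $r-1$ becomes $x_{r-1}+u_{r-1}(x,y,x_r+\alpha_r)$; plugging Proposition \ref{prop6.1}'s formula for $u_{r-1}$ and Taylor-expanding $b_r(F_1)=b_r(x+u)$ matches $\widetilde F_{r-1}$ term by term, via the identities $l_{r-1}\gamma_{1,r-s+1}=1$ and $l_{r-1}\gamma_{k,r-s+1}=\gamma_{k-1,r-s}$ for $k\ge 2$ (from the recursion $(*)$ and $l_i=L_i$ for $s\le i<r$), together with the vanishing $b_r^{(k)}=0$ for $k\ge 2$ stated in Proposition \ref{prop6.1}.

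For the inductive step, assume $E_{t-1}$ has been built, so that $F\circ E_{t-1}$ equals $F_j$ in slots $j\le r-t$, is clean in slots $r-i$ for $0\le i\le t-2$, and has a single complicated slot at $r-t+1$. Set $E_t:=E_{t-1}\circ E''$ with $E'':=(x_{r-t+1}+\beta)$ and
$$\beta:=-\sum_{k=1}^{r-s-t+2}(-1)^k\gamma_{k,r-s-t+2}\sum_{j=0}^{t-1}\frac{1}{(k+j)!}b_{r-t+1-k}^{(k+j)}(x)\,u^{k+j}\in k[x,y].$$
Since $r-t+1\ge s+1\ge 3$, $E''$ is a valid elementary map. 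Slots $j\le r-t-1$ and the already-clean $\widetilde F_{r-i}$ for $0\le i\le t-2$ never involve $x_{r-t+1}$, so are preserved. The definition of $\beta$ is exactly what makes slot $r-t+1$ pass to the clean form $x_{r-t+1}+b_{r-t+1}(F_1)+l_{r-t+1}x_{r-t+2}$, using $b_{r-t+1}^{(k)}=0$ for $k\ge t$. The remaining verification, in slot $r-t$, reduces via Proposition \ref{prop6.1}'s formula for $u_{r-t}$ to the identity
$$l_{r-t}\beta=\bigl(b_{r-t}(F_1)-b_{r-t}(x)\bigr)+\sum_{k=1}^{r-s-t+1}(-1)^k\gamma_{k,r-s-t+1}\sum_{j=1}^{t}\frac{1}{(k+j)!}b_{r-t-k}^{(k+j)}\,u^{k+j},$$
which one obtains by isolating the $k=1$ contribution in $\beta$ (it produces precisely the Taylor remainder $b_{r-t}(F_1)-b_{r-t}(x)$, using $b_{r-t}^{(k)}=0$ for $k\ge t+1$) and by applying $l_{r-t}\gamma_{k,r-s-t+2}=\gamma_{k-1,r-s-t+1}$ for $k\ge 2$ to the remaining terms after relabelling.

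The main obstacle is not conceptual but the careful index bookkeeping in matching the double-sum forms of $\widetilde F_{r-t}$ at consecutive levels; the vanishing $b_{s-1+j}^{(r-s+2-j)}=0$ from Proposition \ref{prop6.1} is precisely what makes the Taylor expansion of $b_{r-t}(F_1)$ truncate at the right degree so that the two sides coincide.
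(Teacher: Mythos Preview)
Your argument is essentially the paper's own: both proceed by induction on $t$, right-composing at each step with a single elementary map on the currently complicated slot (your $(x_{r-t+1}+\beta)$ coincides with the paper's $(x_{r-t}-[u_{r-t}])$ after shifting the induction index by one), and both invoke the recursion $(*)$ together with the vanishing $b_{s-1+j}^{(r-s+2-j)}=0$ from Proposition~\ref{prop6.1} to make the relevant Taylor expansion truncate at the right degree. In your base-case paragraph the Taylor expansion and the vanishing should refer to $b_{r-1}$, not $b_r$ (since $b_r\in k$, the expansion of $b_r(F_1)$ is trivial and plays no role); with that index corrected the base case matches the paper's computation verbatim.
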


\begin{proof} By induction on $t$. First the case $t=1$. From Proposition \ref{prop6.1} (with $t=r-s+1$) and $l_r=0$ we get
$F_r=x_r+[u_r]+b_r$, where
$$[u_r]:=\sum_{k=1}^{r-s+1} (-1)^k\frac{1}{k!}\ga_{k,r-s+1}b^{(k)}_{r-k}(x)u^k$$
\noindent with $b_r\in k$ and $b^{(k+1)}_{r-k}(x)=0$ for all $1\leq k\leq r-s+1$. From  Proposition \ref{prop6.1} (with $t=r-s$)   we get
$$F_{r-1}=x_{r-1}+\sum_{k=1}^{r-s} (-1)^k\frac{1}{k!}\ga_{k,r-s}b^{(k)}_{r-1-k}(x)u^k+l_{r-1}x_r+b_{r-1}(x)$$
\noindent Define $E_1=(x_1,\cdots,x_{r-1},x_r-[u_r])$. Observe that $[u_r]\in k[x,x_2]$ and $r>2$. So $E_1\in E(k,r)$.
Furthermore $F\circ E_1=(F_1,\cdots,F_{r-2},\wF_{r-1},x_r+b_r)$, where
$$\wF_{r-1}=x_{r-1}+\sum_{k=1}^{r-s} (-1)^k\frac{1}{k!}\ga_{k,r-s}b^{(k)}_{r-1-k}(x)u^k+l_{r-1}x_r$$
$$+\sum_{k=1}^{r-s+1} (-1)^{k+1}\frac{1}{k!}l_{r-1}\ga_{k,r-s+1}b^{(k)}_{r-k}(x)u^k+b_{r-1}(x)$$
\noindent Now write
$$\sum_{k=1}^{r-s+1} (-1)^{k+1}\frac{1}{k!}l_{r-1}\ga_{k,r-s+1}b^{(k)}_{r-k}(x)u^k=l_{r-1}\ga_{1,r-s+1}b^{(1)}_{r-1}u$$
$$+\sum_{k=1}^{r-s} (-1)^k\frac{1}{(k+1)!}l_{r-1}\ga_{k+1,r-s+1}b^{(k+1)}_{r-1-k}(x)u^{k+1}$$
\noindent and use that $l_{r-1}\ga_{k+1,r-s+1}=\ga_{k,r-s}$ and $l_{r-1}\ga_{1,r-s+1}=1$. Then we get
$$\wF_{r-1}=x_{r-1}+\sum_{k=1}^{r-s} (-1)^k\ga_{k,r-s}\big[\frac{1}{k!}b^{(k)}_{r-1-k}(x)u^k+\frac{1}{(k+1)!}b^{(k+1)}_{r-1-k}(x)u^{k+1}\big]$$
$$+l_{r-1}x_r+b^{(1)}_{r-1}(x)u+b_{r-1}(x)$$
\noindent Since by Proposition \ref{prop6.1} $b^{(2)}_{r-1}(x)=0$, it follows from Taylor's theorem that
$b_{r-1}(F_1)=b_{r-1}(x+u)=b_{r-1}(x)+b^{(1)}_{r-1}(x)u$. This finishes the proof of the case $t=1$

Now assume $t\geq 1$ and that we already know the existence of a map $E_t$, having the properties as described
in the statement of this corollary. In particular we have
$\wF_{r-t}=x_{r-t}+[u_{r-t}]+b_{r-t}(F_1)+l_{r-t}x_{r-t+1}$. Observe that $[u_{r-t}]\in k[x,x_2]$ and define
$$E':=(x_1,\cdots,x_{r-t-1},x_{r-t}-[u_{r-t}],x_{r-t+1},\cdots,x_r)$$
\noindent Then a similar argument as given for the case $t=1$ above, shows that $(F\circ E_t)\circ E'$ has the desired
form.

\end{proof}


\begin{corollary}\label{cor6.3}
Let $F=(x+u,x_2+u_2,\cdots,x_r+u_r)$. Then
$F$ is elementary equivalent to
$(F_1,\cdots,F_{s-1},\widetilde{F}_{s},x_{s+1},\cdots,x_r)$, where $\wF_{s}=x_{s}+L_{s-1}^{ -1}b_{s-1}(x)$.
\end{corollary}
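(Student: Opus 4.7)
The plan is first to invoke Corollary~\ref{cor6.2} with $t = r - s$ (or take $G := F$ directly when $r = s$) to right-compose $F$ with an elementary $E_{r-s}$ and obtain $G := F \circ E_{r-s}$ with components $F_1, \ldots, F_{s-1}$ unchanged, tail components $x_j + b_j(F_1) + l_j x_{j+1}$ for $s < j \leq r$ (using $l_r = 0$ and $b_r \in k$), and
$$\widetilde{F}_s = x_s + l_s x_{s+1} + b_s(F_1) - L_{s-1}^{-1} \sum_{k=1}^{r-s+1} \frac{1}{k!}\, b_{s-1}^{(k)}(x)\, u^k.$$

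The next step is to recognise the inner sum as a Taylor difference. Proposition~\ref{prop6.1} forces $b_{s-1}^{(r-s+2)} = 0$, so $b_{s-1}$ is a polynomial of degree at most $r-s+1$ and Taylor's theorem yields
$$b_{s-1}(F_1) - b_{s-1}(x) \;=\; b_{s-1}(x + u) - b_{s-1}(x) \;=\; \sum_{k=1}^{r-s+1} \frac{1}{k!}\, b_{s-1}^{(k)}(x)\, u^k,$$
so that $\widetilde{F}_s = x_s + L_{s-1}^{-1} b_{s-1}(x) - L_{s-1}^{-1} b_{s-1}(F_1) + b_s(F_1) + l_s x_{s+1}$.

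Then I would clear the tail by a downward cascade of left-compositions with elementary maps. First, $(x_r - b_r)$ sends the $r$-th component to $x_r$. For $j = r-1, r-2, \ldots, s+1$, once component $j+1$ is already $x_{j+1}$, the elementary map $(x_j - b_j(x_1) - l_j x_{j+1})$ (elementary because $b_j(x_1) + l_j x_{j+1}$ avoids $x_j$) sends component $j$ to $x_j$. Finally, left-compose with $(x_s + L_{s-1}^{-1} b_{s-1}(x_1) - b_s(x_1) - l_s x_{s+1})$; using that by this point $G_1 = F_1$ and $G_{s+1} = x_{s+1}$, a direct cancellation collapses the $s$-th component to $x_s + L_{s-1}^{-1} b_{s-1}(x)$. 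Because each left-composition modifies only its own target slot, positions $1, \ldots, s-1$ retain $F_1, \ldots, F_{s-1}$ throughout, producing the claimed map.

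The only non-routine step is the Taylor identification in the second paragraph: the precise vanishing $b_{s-1}^{(r-s+2)} = 0$ from Proposition~\ref{prop6.1} is exactly what makes the long $u$-polynomial inside $\widetilde{F}_s$ collapse into the finite difference $b_{s-1}(F_1) - b_{s-1}(x)$, which can then be stripped off by a single elementary left-composition. The subsequent cascade reducing components $r, r-1, \ldots, s+1$ to $x_r, x_{r-1}, \ldots, x_{s+1}$ is a mechanical cleanup that the form coming out of Corollary~\ref{cor6.2} has been engineered to admit.
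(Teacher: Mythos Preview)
Your proof is correct and follows essentially the same route as the paper: apply Corollary~\ref{cor6.2} with $t=r-s$, use Proposition~\ref{prop6.1} and Taylor's theorem to rewrite the $u$-sum in $\widetilde{F}_s$ as $b_{s-1}(F_1)-b_{s-1}(x)$, and then strip the tail by left-composing with elementary maps. The only cosmetic difference is that the paper removes the $b_j(F_1)$ terms and the $l_j x_{j+1}$ terms in two separate stages, whereas your downward cascade handles both at once.
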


\begin{proof}
By Corollary \ref{cor6.2}, with $t=r-s$, there exists $E\in E(k,r)$ such that
$$F\circ E=(F_1,\cdots,F_{s-1},\wF_s, x_{s+1}+b_{s+1}(F_1)+l_{s+1}x_{s+2},\cdots,x_{r-1}+b_{r-1}(F_1)+l_{r-1}x_r,x_r)$$
\noindent where
$$\wF_s=x_s-L_{s-1}^{-1}\big[b^{(1)}_{s-1}(x)u+\frac{1}{2!}b^{(2)}_{s-1}(x)u^2+\cdots+\frac{1}{(r-s+1)!}b^{(r-s+1)}_{s-1}(x)u^{r-s+1}\big]$$
$$+b_s(F_1)+l_s x_{s+1}$$
\noindent Since $b^{(r-s+2)}_{s-1}(x)=0$, by Proposition \ref{prop6.1},  it follows from Taylor's theorem, using $F_1=x+u$,
that
$$b_{s-1}(F_1)=b_{s-1}(x)+b^{(1)}(x)u+\frac{1}{2!}b^{(2)}_{s-1}(x)u^2+\cdots\frac{1}{(r-s+1)!}b^{(r-s+1)}_{s-1}(x)u^{r-s+1}$$
\noindent So
$$\wF_s=x_s-L_{s-1}^{-1}\big[b_{s-1}(F_1)-b_{s-1}(x)\big]+b_s(F_1)+l_sx_{s+1}$$

\noindent So if we define
$$E':=(x_1,\cdots,x_{s-1},x_s+L_{s-1}^{-1}b_{s-1}(x_1)-b_s(x_1),x_{s+1}-b_{s+1}(x_1),\cdots,x_{r-1}-b_{r-1}(x_1),x_r)$$
\noindent Then $E'\in E(k,r)$ and
$$E'\circ F\circ E=(F_1,\cdots,F_{s-1},x_s+L_{s-1}^{-1}b_{s-1}(x)+l_s x_{s+1},x_{s+1}+l_{s+1}x_{s+2},\cdots,x_{r-1}+l_{r-1}x_r,x_r)$$
\noindent One readily verfies that $E'\circ F\circ E$ is elementary equivalent to
$$F':=(F_1,\cdots,F_{s-1},x_s+L_{s-1}^{-1}b_{s-1}(x),x_{s+1},\cdots,x_r)$$
\noindent which completes the proof.
\end{proof}
Now we are ready to prove

\begin{proposition}\label{prop6.4} Let $F=(x+u,x_2+u_2,\cdots,x_r+u_r)$. Then $F\in E(k,r)$.
\end{proposition}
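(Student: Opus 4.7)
The plan is to induct on $r$, with the base case $r=2$ handled classically. For a $2$-dimensional map $F=(x+u,y+h)$ with nilpotent $JH$, the rows of $JH$ are linearly dependent over $k$, so either $u\in k$ (in which case $F$ is visibly in $E(k,2)$), or there exist $\mu,c\in k$ with $h=-\mu u+c$; in the second case the trace equation $u_x-\mu u_y=0$ gives $u=p(y+\mu x)$, and conjugation by the elementary shear $\sigma=(x,y+\mu x)$ sends $F$ to $(x+p(y),y+c)$, manifestly in $E(k,2)$.

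For $r\geq 3$ I would first invoke Corollary \ref{cor6.3} to bring $F$ into the form $F'=(F_1,\ldots,F_{s-1},\widetilde{F}_s,x_{s+1},\ldots,x_r)$ with $\widetilde{F}_s=x_s+L_{s-1}^{-1}b_{s-1}(x)$. The statement of Corollary \ref{cor6.3} as written rests on Corollary \ref{cor6.2} with $t=r-s\geq 1$, so when $s=r$ I will first apply the single elementary shear $(x_r+L_{r-1}^{-1}b_{r-1}(x_1)-b_r)$ on the left; using $b_{r-1}^{(2)}=0$ from Proposition \ref{prop6.1} and the Taylor identity $b_{r-1}(x+u)=b_{r-1}(x)+b'_{r-1}(x)u$, this transforms $F_r$ into $x_r+L_{r-1}^{-1}b_{r-1}(x)$, recovering the desired form.

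If $s=2$, the right composition $E''=(x_1,x_2-a(x_1),x_3,\ldots,x_r)$ followed by the left shear $(x_1-p(x_2))$ sends $F'$ to the identity, since $F_1\circ E''=x+p(y)$ and $\widetilde{F}_2\circ E''=y$. If $s\geq 3$, I would apply the right shear $E''=(x_1,\ldots,x_{s-1},x_s-L_{s-1}^{-1}b_{s-1}(x_1),x_{s+1},\ldots,x_r)$: the $s$-th coordinate becomes $x_s$, and, using that $u_{s-1}=R(u)+P_{s-1}(x_s+L_{s-1}^{-1}b_{s-1}(x))$ with $R(T)=\sum_j c_{s-1,j}T^j\in k[T]$ (the coefficients $c_{s-1,j}$ are constants because $d_{s-1}\geq 2$, by Theorem \ref{main1}(b)), the $(s-1)$-th coordinate becomes $x_{s-1}+R(u)+P_{s-1}(x_s)$. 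A subsequent left shear $(x_{s-1}-P_{s-1}(x_s))$ then yields the reduced map $G=(F_1,\ldots,F_{s-2},x_{s-1}+R(u),x_s,x_{s+1},\ldots,x_r)$.

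Since $G$ acts as the identity on $x_s,\ldots,x_r$, to conclude $F\in E(k,r)$ it suffices to show that $G'=(F_1,\ldots,F_{s-2},x_{s-1}+R(u))=X+H'$, with $H'=(u,u_2,\ldots,u_{s-2},R(u))$, lies in $E(k,s-1)$, and this is where I would invoke the inductive hypothesis at the strictly smaller dimension $s-1<r$. The main obstacle is verifying that $JH'$ is nilpotent so that the hypothesis of Proposition \ref{prop6.4} applies to $G'$: the first $s-3$ equations of Proposition \ref{nilpotency} for $H'$ are identical to those for $H$ and hold by assumption, the penultimate equation simplifies via the identity $u_{s-1,y}=R'(u)u_y=R(u)_y$ (which uses that the $c_{s-1,j}$ are constants and that $P_{s-1}(x_s+\cdot)$ is independent of $y$), and the last equation collapses to $u_xR(u)_y-u_yR(u)_x=R'(u)(u_xu_y-u_yu_x)=0$. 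Granted this nilpotency, induction delivers $G'\in E(k,s-1)$, whence $G\in E(k,r)$ and so $F\in E(k,r)$.
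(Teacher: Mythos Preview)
Your argument is correct and follows the same overall strategy as the paper: invoke Corollary~\ref{cor6.3} to strip the tail $x_{s+1},\ldots,x_r$, then use the shear $E''$ and the left shear in $x_{s-1}$ to reduce $u_{s-1}$ to the pure polynomial $R(u)=\sum_j c_{s-1,j}u^j$ with constant $c_{s-1,j}$, and finally verify via Proposition~\ref{nilpotency} that the truncated $H'=(u,u_2,\ldots,u_{s-2},R(u))$ again has nilpotent Jacobian so that induction applies.

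The only genuine difference is the induction parameter. The paper inducts on $n(H)$, the number of indices $i$ with $d_i\ge 2$; it keeps the ambient dimension $r$ and pads the reduced map to $\widetilde H=(u_1,\ldots,u_{s-2},[u_{s-1}],0,\ldots,0)$, noting $n(\widetilde H)=n(H)-1$. You instead induct on the dimension $r$ and restrict to the first $s-1$ coordinates, with a separately proved base case $r=2$. Both schemes terminate and the reduction step is the same computation. Your version has the small virtue of making the $s=r$ edge case of Corollary~\ref{cor6.3} explicit (since Corollary~\ref{cor6.2} is only stated for $t\ge 1$), which the paper's proof passes over; your direct left shear $(x_r+L_{r-1}^{-1}b_{r-1}(x_1)-b_r)$ together with $b_{r-1}^{(2)}=0$ recovers the needed form $\widetilde F_r=x_r+L_{r-1}^{-1}b_{r-1}(x)$.
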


\begin{proof} We use induction on $n(H)$:= the number of $d_i\geq 2$. Since $d_1=2$ we have $n(H)\geq 1$.
First the case $n(H)=1$. So $s=2$. It follows from Corollary \ref{cor6.3} that $F$ is elementary equivalent
to $(F_1,\wF_2,x_3,\cdots,x_r)$, where $\wF_2=x_2+a(x)$ ($L_1$=1 and $b_1(x)=a(x)$). Since
$F_1=x+p(x_2+a(x))$, the case $n(H)=1$ follows.

So let $n(H)>1$. Then $s\geq 3$. Since $d_{s-1}\geq 2$ it follows from Theorem \ref{main1} that $u_{s-1}=[u_{s-1}]+P_{s-1}(x_s+L_{s-1}^{-1}b_{s-1}(x))$, where $[u_{s-1}]=\sum c_{s-1,j} u^j$, with $c_{s-1,j}\in k$ for all $j$. So by
Corollary \ref{cor6.3} $F$ is elementary equivalent to
$$F':=(F_1,\cdots,F_{s-2},x_{s-1}+[u_{s-1}]+P_{s-1}(x_s+L_{s-1}^{-1}b_{s-1}(x)),x_s+L_{s-1}^{-1}b_{s-1}(x),x_{s+1},\cdots,x_r)$$
\noindent Now define the elementary map
$$E'':=(x_1,\cdots,x_{s-1},x_s-L_{s-1}^{-1}b_{s-1}(x),x_{s+1},\cdots,x_r)$$
\noindent Then
$$F'\circ E''=(F_1,\cdots,F_{s-2},x_{s-1}+[u_{s-1}]+P_{s-1}(x_s),x_{s},\cdots,x_r)$$
\noindent Consequently, $F'\circ E''$ is elementary equivalent to $(F_1,\cdots,F_{s-2},x_{s-1}+[u_{s-1}],x_s,\cdots,x_r)$.
Finally put $\wH:=(u_1,\cdots,u_{s-2},[u_{s-1}],0,\cdots,0)$. Then obviously $\wH$ is special and $n(\wH)=n(H)-1$.
It follows from Proposition \ref{nilpotency} that $J(\wH)$ is nilpotent. So by the  induction hypothesis we get that
$F'\circ E''\in E(k,r)$, which implies that $F\in E(k,r)$, as desired.
\end{proof}

\end{document}